\numberwithin{equation}{section}
\journal{}
\begin{document}

\newtheorem{definition}{Definition}
\newtheorem{lemma}{Lemma}
\newtheorem{remark}{Remark}
\newtheorem{theorem}{Theorem}
\newtheorem{proposition}{Proposition}
\newtheorem{assumption}{Assumption}
\newtheorem{example*}{Example}
\newtheorem{corollary}{Corollary}
\def\e{\varepsilon}
\def\Rn{\mathbb{R}^{n}}
\def\Rm{\mathbb{R}^{m}}
\def\E{\mathbb{E}}
\def\hte{\bar\theta}
\def\cC{{\mathcal C}}
\newcommand{\Cp}{\mathbb C}
\newcommand{\R}{\mathbb R}
\renewcommand{\L}{\mathcal L}
\newcommand{\supp}{\mathrm{supp}\,}

\numberwithin{equation}{section}

\begin{frontmatter}

\title{{\bf Random attractors for damped stochastic fractional Schr\"odinger equation on $\mathbb{R}^{n}$
}}
\author{\normalsize{\bf Li Lin$^{a,}\footnote{ linlimath@whut.edu.cn}$,
Yanjie Zhang$^{b}\footnote{zhangyj2022@zzu.edu.cn}$,
Ao Zhang$^{c,}\footnote{aozhang1993@csu.edu.cn}$,
} \\[10pt]
\footnotesize{${}^a$ School of Mathematics and Statistics, Wuhan University of Technology, Wuhan 430074, China} \\[5pt]
\footnotesize{${}^b$ School of Mathematics and Statistics, Zhengzhou University 450001,  China.}  \\[5pt]
\footnotesize{${}^c$
School of Mathematics and Statistics and HNP-LAMA,  Central South University,  Changsha 410083, China.}
}

\begin{abstract}
We study the random attractors associated with the stochastic fractional Schr\"odinger equation on $\mathbb{R}^n$. Utilizing the stochastic Strichartz estimates for the damped fractional Schr\"odinger equation with Gaussian noise, we show the existence and uniqueness of a global solution to the damped stochastic fractional nonlinear Schr\"odinger equation in $H^{\alpha}(\mathbb{R}^n)$. Furthermore, we demonstrate that this equation defines an infinite-dimensional dynamical system, which possesses a global attractor in $H^{\alpha}(\mathbb{R}^n)$.

\end{abstract}

\begin{keyword}
 Fractional nonlinear Schr\"{o}dinger equation, stochastic Strichartz estimates, global existence, global attractor.

\end{keyword}

\end{frontmatter}

%\linenumbers

\section{Introduction.}

The fractional nonlinear  Schr\"odinger equation appears in various fields such as nonlinear optics \cite{Longhi15}, quantum physics \cite{BM23} and water propagation \cite{Pusateri14}. Inspired by the Feynman path approach to quantum mechanics, Laskin \cite{Laskin02} used the path integral over L\'evy-like quantum mechanical paths to obtain a fractional Schr\"odinger equation.  Kirkpatrick et al. \cite{Kirkpatrick02} considered a general class of discrete nonlinear Schr\"odinger equations on the lattice $h\mathbb{Z}$ with mesh size $h>0$, they showed that the limiting dynamics were given by a nonlinear fractional  Schr\"odinger equation when $h\rightarrow 0$. Guo and Huang \cite{Guo12} applied concentration compactness and commutator estimates to obtain the existence of standing waves for nonlinear fractional Schr\"odinger equations under some assumptions. Shang and Zhang \cite{SZ} studied the existence and multiplicity of solutions for the critical fractional Schr\"odinger equation. They proved that the equation had a nonnegative ground state solution and also investigated the relation between the number of solutions and the topology of the set.  Choi and Aceves \cite{CA23} proved that the solutions of the discrete nonlinear Schr\"odinger equation with non-local algebraically decaying coupling converged strongly in $L^2(\mathbb{R}^n)$ to those of the continuum fractional nonlinear Schr\"odinger equation.  Frank and his collaborators \cite{LS15} proved general uniqueness results for radial solutions of linear and nonlinear equations involving the fractional Laplacian $(-\Delta)^{\alpha}$ with $\alpha\in(0,1)$ for any space dimensions $N\geq 1$. Wang and Huang \cite{Huang15} proposed an energy conservative difference scheme for the nonlinear fractional Schr\"odinger equations and gave a rigorous analysis of the conservation property. Boulenger et al. \cite{Boulenger16} derived a localized virial estimate for fractional nonlinear Schr\"odinger equation in $\mathbb{R}^{n}$ and proved the general blow-up result. Zhu and his collaborators \cite{Zhu16, Zhu18} found the sharp threshold mass of the existence of finite-time blow-up solutions and the sharp threshold of the scattering versus blow-up dichotomy for radial data. Lan \cite{LY22}  showed that if the initial data had negative energy and slightly supercritical mass, the solution for $L^2$-critical fractional Schr\"odinger equations blew up in finite time. Dinh \cite{Dinh19} studied dynamical properties of blow-up solutions to the focusing mass-critical nonlinear fractional Schr\"odinger equation, and obtained the $L^2$-concentration and the limiting profile with minimal mass of blow-up solutions. B. Alouini \cite{BA21, BA22} focused on the asymptotic dynamics for a dynamical system generated by a nonlinear dispersive fractional Schr\"odinger type equation and proved the existence of regular finite dimensional global attractor in the phase space with finite fractal dimension and the energy space. Goubet and Zahrouni \cite{GZ17} considered a weakly damped forced nonlinear fractional Schr\"odinger equation and proved that this global attractor had a finite fractal dimension if the external force was in a suitable weighted space.

In some circumstances, randomness has to be taken into account. The influence of noise on the propagation of waves is a very important problem. It can drastically change qualitative behaviors and result in new properties.  Bouard and Debussche \cite{Debussche99, Debussche05} studied a conservative stochastic nonlinear Schr\"odinger equation and the influence of multiplicative Gaussian noise, and showed the global existence and uniqueness of solutions. Herr et al. \cite{rockner19} studied the scattering behavior of global solutions for the stochastic nonlinear Schr\"odinger equations with linear multiplicative noise. Barbu and his collaborators \cite{Barbu17} showed the explosion could even be prevented with high probability on the whole time interval $[0, \infty)$. Debussche and Menza \cite{Debussche02} numerically investigated nonlinear Schr\"odinger equations with a stochastic contribution. Brze\'zniak et al. \cite{Liu21} established a new version of the stochastic Strichartz estimate for the stochastic convolution driven by jump noise. Deng et al. \cite{Deng22} studied the propagation of randomness under nonlinear dispersive equations by the theory of random tensors. Cui et al. \cite{H19} demonstrated that the solutions of stochastic nonlinear Schr\"odinger equations can be approximated by the solutions of coupled splitting systems. Yuan and Chen \cite{Chen17} proved the existence of martingale solutions for the stochastic fractional nonlinear Schr\"odinger equation on a bounded interval.  There are few results on the well-posedness on an unbounded interval and asymptotic behavior of damped stochastic fractional nonlinear Schr\"odinger equation in $H^{\alpha}(\mathbb{R}^n)$,  due to the complexity brought by the fractional Laplacian operator $(-\Delta)^{\alpha}$, the nonlinear term $|u|^{2\sigma}u$ and low regular of white noise.

In this paper, we examine the following stochastic fractional nonlinear Schr\"odinger equation with multiplicative noise in $H^{\alpha}(\mathbb{R}^n)$,
\begin{equation}
{\label{0lin}}
\left\{
\begin{aligned}
&i du-\left[(-\Delta)^{\alpha} u-|u|^{2\sigma}u\right] dt+i\gamma udt=f(t,x,u)dt- u \circ dW(t), \quad x \in \mathbb{R}^{n}, \quad t \geq 0, \\
&u(0)=u_0,
\end{aligned}
\right.
\end{equation}
where $u$ is a complex valued process defined on $\mathbb{R}^{n} \times \mathbb{R}^{+}$,  $0<\sigma,\gamma <\infty$,  and $W(t)$ is a  classical real-valued Wiener process. The fractional Laplacian operator $(-\Delta)^{\alpha}$ with  admissible exponent $\alpha \in (0,1)$ is involved.  The notation $\circ$ stands for Stratonovitch integral.

Let $(\Omega, \mathcal{F}, \mathbb{P})$ be a probability space, $\left(\mathcal{F}_{t}\right)_{t \geq 0}$ be a filtration, and let $\left(\beta_{k}\right)_{k \in \mathbb{N}}$ be a sequence of independent Brownian motions associated to this filtration.
The equation \eqref{0lin} can be rewritten as
\begin{equation}
i d u-\left[(-\Delta)^{\alpha} u-|u|^{2\sigma}u \right] d t+i\left(\gamma+\frac{1}{2} \right) udt=f(t,x,u)dt-udW.
\end{equation}

Here we are particularly interested in the long-term dynamics of the stochastic system \eqref{0lin}, especially the existence and uniqueness of random attractors for the damped fractional
Schr\"odinger equation with linear multiplicative noise. More precisely, we will prove the existence and uniqueness of the random attractor of the corresponding random dynamical system associated with \eqref{0lin} in $H^{\alpha}(\mathbb{R}^n)$. To study the random attractor of \eqref{0lin}, we must establish the well-posedness of the stochastic system \eqref{0lin}. In contrast to the case of stochastic nonlinear Schr\"odinger equation with $\alpha=2$. There are some essential difficulties in our problems. The first difficulty is the appearance of the fractional Laplacian operator $(-\Delta)^{\alpha}$. The second difficulty lies in how to solve the problem of low regularity of noisy sample paths and non-compact Sobolev embeddings on unbounded regions. The third difficulty lies in how to find the appropriate techniques for handling nonlinear terms $|u|^{2\sigma}u$, which is different from the existing results.

This paper is organized as follows.  In Section 2,  we introduce some notations and state our main results in this present paper.
In Section 3, we construct a truncated equation and prove the existence of a local solution of equation \eqref{0lin} in the whole space. Meanwhile, we use the stopping time technique, deterministic and stochastic fractional Strichartz inequalities to prove the global existence of the original equation \eqref{0lin}. In Section 4, we derive uniform estimates on the solution in $H^\alpha(\mathbb{R}^n)$ including the uniform estimates on the tails of solutions. Then, we prove the existence of random attractors in $H^\alpha(\mathbb{R}^n)$.

\section{Statement of the main results}
\subsection{Assumptions and preliminaries}
In this section, we borrow some well-known results from the theory of non-autonomous random dynamical systems. Let $(\Omega,\mathcal{F},\mathbb{P})$ be a probability space, $(X,\|\cdot\|_{X})$  be separable Banach spaces.
\begin{definition}
  Let $(\Omega,\mathcal{F},\mathbb{P},(\theta_t)_{t\in\mathbb{R}})$ be a metric dynamical systems. A mapping $\Phi:\mathbb{R^+}\times\mathbb{R}\times\Omega \times X\rightarrow X$ is called a continuous cocycle on $X$ over $(\Omega,\mathcal{F},\mathbb{P},(\theta_t)_{t\in\mathbb{R}})$ if for all
  $\tau \in \mathbb{R}, \omega\in \Omega$ and $t,s in \mathbb{R^+}$, the following conditions are satisfied:
\begin{itemize}
  \item $\Phi(\cdot,\tau,\cdot,\cdot):\mathbb{R^+}\times \Omega \times X\rightarrow X$ is a $(\mathcal{B}(R^+)\times\mathcal{F}\times \mathcal{B}(X)),\mathcal{B}(X))$-measurable mapping;
  \item $\Phi(0,\tau,\omega,\cdot)$ is the identity on $X$;
  \item $\Phi(t+s,\tau,\omega,\cdot)=\Phi(t,\tau+s,\theta_s\omega,\cdot)\circ
  \Phi(s,\tau,\omega,\cdot)$;
  \item $\Phi(t,\tau,\omega,\cdot):X\rightarrow X$ is continuous.
\end{itemize}
\end{definition}

\begin{definition}
    A random bounded set $\{D(\omega)\}_{\omega\in\Omega}\subset X$ is called
    tempered with respect to $(\theta_t)_{t\in \mathbb{R}}$ if
    for $P-a.e.~\omega\in\Omega$ and all $\varepsilon>0$
    $$\lim_{t\rightarrow \infty} e^{-\varepsilon t}d(D(\theta_{-t}\omega))=0,$$
    where $d(D)=sup_{\chi\in D}\|\chi\|_{X}.$
\end{definition}
 In the sequel, we will use $\mathcal{D}$ to denote the collection of all tempered families of bounded nonempty subsets of $H^{\alpha}(\mathbb{R}^n)$,

\begin{definition}
  Let  $\Phi$ be a continuous cocycle on $X$. Then $\Phi$ is said to be $\mathcal{D}$-pullback asymptotically compact in $X$ if for all
  $\omega\in \Omega, D\in \mathcal{D}$ and the sequences $t_n\rightarrow\infty,
  x_n\in D(-t_n,\theta_{-t_n}\omega)$, the sequence
$\{\Phi(t_n,-t_n,\theta_{-t_n}\omega,x_n)\}_{n=1}^\infty$ has a convergent
    subsequence in $X$.
\end{definition}

Now, we introduce some notations for clarity.  The notation $C(\cdot)$
represents a positive constant that depends on the parameter $\cdot$, its value may vary from one line to another. In cases involving more than two parameters, we use $C$ to denote the constant. For $p\geq 2$, the notation $L^{p}$ denotes the Lebesgue space of complex-valued functions.  The inner product in $L^{2}\left(\mathbb{R}^{n}\right)$ is endowed with
\begin{equation}
(f, g)={\bf{Re}} \int_{\mathbb{R}^{n}} f(x) \bar{g}(x) dx,
\end{equation}
for $f, g \in L^{2}\left(\mathbb{R}^{n}\right) $.

 The fractional Laplace operator $(-\Delta)^{\alpha}$ with $\alpha\in(0,1)$ is given by
\begin{equation*}
(-\Delta)^{\alpha}u(x)=C(n,\alpha) ~ \rm{P.V.} \int_{\mathbb{R}^n}\frac{u(x)-u(y)}{|x-y|^{n+2\alpha}}dy,\\
\end{equation*}
where $\rm{P.V.}$ means the principle value of the integral, and $C(n,\alpha)$ is a positive constant given by
\begin{equation}
\begin{aligned}
C(n,\alpha)=\frac{\alpha4^{\alpha}\Gamma(\frac{n+2\alpha}{2})}{\pi^{\frac{n}{2}}\Gamma(1-\alpha)}.
\end{aligned}
\end{equation}
For every $\alpha\in(0,1)$, the space $H^{\alpha}(\mathbb{R}^n)$ is denoted by
\begin{equation*}
\begin{aligned}
H^{\alpha}(\mathbb{R}^n)=\left\{u\in L^2(\mathbb{R}^n): \int_{\mathbb{R}^n}\int_{\mathbb{R}^n}\frac{|u(x)-u(y)|^2}{|x-y|^{n+2\alpha}}dxdy < \infty \right\}.
\end{aligned}
\end{equation*}
Then $H^{\alpha}(\mathbb{R}^n)$ is a Hilbert space with inner product given by
\begin{equation*}
(u,v)_{H^{\alpha}(\mathbb{R}^n)}=(u,v)_{L^{2}(\mathbb{R}^n)}+\mathbf{Re}\int_{\mathbb{R}^n}\int_{\mathbb{R}^n}\frac{\left(u(x)-u(y)\right)\left(\bar{v}(x)-\bar{v}(y)\right)}{|x-y|^{n+2\alpha}}dxdy
\end{equation*}
and
\begin{equation*}
(u,v)_{\dot{H}^{\alpha}(\mathbb{R}^n)}=\mathbf{Re}\int_{\mathbb{R}^n}\int_{\mathbb{R}^n}\frac{\left(u(x)-u(y)\right)\left(\bar{v}(x)-\bar{v}(y)\right)}{|x-y|^{n+2\alpha}}dxdy,\quad u, v\in H^{\alpha}(\mathbb{R}^n).
\end{equation*}
Then we have
\begin{equation*}
(u,v)_{H^{\alpha}(\mathbb{R}^n)}=(u,v)_{L^2(\mathbb{R}^n)}+(u,v)_{\dot{H}^{\alpha}(\mathbb{R}^n)},\quad u, v\in H^{\alpha}(\mathbb{R}^n).
\end{equation*}

By \cite{EG12}, we know that
\begin{equation*}
\|(-\Delta)^{\frac{\alpha}{2}}u\|^{2}_{L^{2}(\mathbb{R}^n)}=\frac{1}{2}C(n,\alpha)\|u\|^{2}_{\dot{H}^{\alpha}(\mathbb{R}^n)}.
\end{equation*}

Here, we give some regularity assumptions for the coefficient $f$ of system \eqref{0lin}. \\
\noindent ($\bf{A_{f}}$): For the function $f$ in \eqref{0lin}, we assume that $f: \mathbb{R}\times \mathbb{R}^n\times \mathbb{C}\rightarrow \mathbb{C}$ is continuous
and satisfies, for all $t, u\in \mathbb{R}$ and $x,y \in \mathbb{R}^{n}$,
\begin{align}
\label{assum1}
\mathbf{Im}\left(f(t,x,u)\bar{u}\right)&\leq -\beta|u|^2+\psi_1(t,x),\\
\label{assum2}
|f(t,x,u)|&\leq \psi_2(x)|u|+\psi_3(t,x),\\\label{assum3}
\frac{\partial f}{\partial u}(t,x,u)&\leq \psi_4(x),\\
\label{assum4}
|f(t,x,u)-f(t,y,u)|&\leq|\psi_5(x)-\psi_5(y)|,\\
\label{assum5}
\frac{\partial f_x}{\partial u}(t,x,u)&\leq\psi_6(t,x),\\
\label{assum6}
|f_x(t,x,u)-f_x(t,y,u)|&\leq|\psi_7(x)-\psi_7(y)|,\\
\end{align}
where $\beta>0$ is a constant, and
\begin{equation*}
\begin{aligned}
  &\psi_1 \in L^{\infty}_tL^{1}_x, \quad\psi_2\in L^{\infty}_x, \quad\psi_3\in L^{\infty}_tL^{2}_x \cap L^{\infty}_tL^{2\sigma+1}_x \cap L^{p}_tL^{\frac{n(\sigma+2)}{n+\sigma(n-\alpha)}}_x,  \\
  &\psi_4\in L^{\infty}_x, ~~\psi_5\in H^{\alpha}_x(\mathbb{R}^n),~~ \psi_6\in L^{\infty}_x, \quad\psi_7\in H^{\alpha}_x(\mathbb{R}^n).
\end{aligned}
\end{equation*}
with $p=\frac{4\alpha(\sigma+2)}{\sigma(n-2\alpha)}$.

\subsection{Main results}
This paper will study the weak pullback attractor for the damped stochastic fractional Schr\"odinger equation on $\mathbb{R}^n$ with $n\geq 2$. Our work is divided into two parts. In the first part, we will examine the well-posedness of a global solution for the systems with the fractional Laplacian operator, nonlinear term $|u|^{2\sigma}u$ and white noise. In the second part, we will prove that this equation provides an infinite dimensional dynamical system that possesses a random attractor in $H^{\alpha}(\mathbb{R}^n) $.

The first main result of this paper is about the existence and uniqueness of a global solution for the systems \eqref{0lin}.

\begin{theorem}
\label{theorem1}
Let $n \geq 2$, $\alpha$ be in the interval $\left(\frac{n}{2 n-1}, 1\right)$ and $0<\sigma<\frac{2 \alpha}{n-2 \alpha}$. Let
\begin{equation*}
p=\frac{4 \alpha(\sigma+2)}{\sigma(n-2 \alpha)}, \quad q=\frac{n(\sigma+2)}{n+\sigma \alpha}.
\end{equation*}
Then under Assumptions ($\bf{A_{f}}$) and $\sigma n=2\alpha$, for any $u_0 \in H^{\alpha}$ radial, there exists a unique global solution of equation \eqref{0lin} in $H^{\alpha}$, i.e., $\tau^{\ast}(u_0)=\infty$.
\end{theorem}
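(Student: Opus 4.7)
The plan is to promote the local $H^{\alpha}$-solution on $[0,\tau^{\ast}(u_0))$, constructed in the preceding part of Section~3 via the truncation-plus-fixed-point scheme, to a global one by ruling out the blowup alternative $\limsup_{t\nearrow\tau^{\ast}}\|u(t)\|_{H^{\alpha}}=+\infty$. The first step is to convert the Stratonovich multiplicative noise into pathwise-continuous forcing through a Doss--Sussmann change of variable $v(t)=e^{-iW(t)}u(t)$. Because $|e^{\pm iW(t)}|=1$ and the nonlinearity $|u|^{2\sigma}u$ is gauge-invariant, a direct Stratonovich computation gives
\begin{equation*}
\partial_{t}v=-i(-\Delta)^{\alpha}v+i|v|^{2\sigma}v-\gamma v-ie^{-iW(t)}f\bigl(t,x,e^{iW(t)}v\bigr),
\end{equation*}
a damped fractional NLS with no stochastic integral, while all $H^{\alpha}$- and Strichartz-norms of $u$ and $v$ coincide. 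The theorem thereby reduces to a deterministic a priori bound valid for $\mathbb{P}$-a.e.\ sample path.

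The second step is a mass/energy cascade for $v$. Pairing the equation with $\bar v$ in $L^{2}$, taking imaginary parts, and using \eqref{assum1} together with $\psi_{1}\in L^{\infty}_{t}L^{1}_{x}$ yields
\begin{equation*}
\tfrac{d}{dt}\|v\|_{L^{2}}^{2}+2(\gamma+\beta)\|v\|_{L^{2}}^{2}\le 2\|\psi_{1}\|_{L^{1}_{x}},
\end{equation*}
so $\|v(t)\|_{L^{2}}$ is uniformly bounded in $t$ by a constant depending only on $\|u_{0}\|_{L^{2}}$, $\gamma$, $\beta$, and $\|\psi_{1}\|_{L^{\infty}_{t}L^{1}_{x}}$. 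Differentiating the energy
\begin{equation*}
E(v)=\tfrac{1}{2}\|(-\Delta)^{\alpha/2}v\|_{L^{2}}^{2}-\tfrac{1}{2\sigma+2}\|v\|_{L^{2\sigma+2}}^{2\sigma+2}
\end{equation*}
along the flow and exploiting \eqref{assum3}--\eqref{assum6} together with the damping absorbs the non-conservative contributions. In the mass-critical regime $\sigma n=2\alpha$ the fractional Gagliardo--Nirenberg inequality reads $\|v\|_{L^{2\sigma+2}}^{2\sigma+2}\le C_{\mathrm{GN}}\|v\|_{L^{2}}^{2\sigma}\|(-\Delta)^{\alpha/2}v\|_{L^{2}}^{2}$, so once the exponential mass decay drives $\|v\|_{L^{2}}^{2\sigma}$ below $(\sigma+1)/C_{\mathrm{GN}}$ the energy becomes coercive in $\|(-\Delta)^{\alpha/2}v\|_{L^{2}}$.

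The main obstacle is precisely this mass-critical balance: on the initial interval where $\|v(t)\|_{L^{2}}$ still exceeds the Weinstein threshold, the sharp Gagliardo--Nirenberg bound alone does not coerce $\|(-\Delta)^{\alpha/2}v\|_{L^{2}}$ by $E(v)$. This is where the radial hypothesis and the fractional Strichartz estimates play the decisive role: radial symmetry is preserved by $(-\Delta)^{\alpha}$, by the gauge factor $e^{\pm iW(t)}$, and by the power nonlinearity, and it gives access both to the radial fractional Sobolev embedding and to the improved Strichartz estimates for radial data. Combined with the deterministic and stochastic fractional Strichartz bounds established earlier in Section~3, applied on the admissible pair $(p,q)$ of the statement and with dual exponent matching exactly the space of $\psi_{3}$ in $(\mathbf{A}_{f})$, one closes a bootstrap of the form
\begin{equation*}
\|v\|_{L^{p}([0,T];L^{q}_{x})}+\|v\|_{C([0,T];H^{\alpha})}\le C\bigl(T,\|u_{0}\|_{H^{\alpha}},\mathcal{N}(\psi),\sup_{s\le T}|W(s)|\bigr)
\end{equation*}
on every compact $[0,T]\subset[0,\tau^{\ast})$, where $\mathcal{N}(\psi)$ denotes the collection of norms postulated in $(\mathbf{A}_{f})$.

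To upgrade this pathwise bound to an almost-sure global existence statement, I would localize by the stopping times $\tau_{R}=\inf\{t\ge 0:\|u(t)\|_{H^{\alpha}}\ge R\}\wedge\tau^{\ast}$, apply the preceding estimates on $[0,\tau_{R}]$, and send $R\to\infty$. Together with the fact that $\sup_{s\le T}|W(s)|<\infty$ almost surely for every $T$, the resulting bound contradicts the local blowup criterion on the event $\{\tau^{\ast}<\infty\}$, which forces $\mathbb{P}(\tau^{\ast}(u_{0})=\infty)=1$; combined with the uniqueness already produced by the local existence step, this yields the claim.
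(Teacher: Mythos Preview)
Your overall strategy is sound but takes a genuinely different route from the paper. You first apply the Doss--Sussmann change $v=e^{-iW}u$ to eliminate the stochastic integral and then argue entirely pathwise; the paper instead keeps the It\^o SPDE, applies It\^o's formula to the energy $H(u)$, and controls $\mathbb{E}\bigl[\sup_{t\le\tau}H(u(t))\bigr]$ directly. The stochastic integral---which simply disappears in your picture---is the paper's term $\mathcal{J}_{1}$ and is handled by a BDG-type estimate borrowed from \cite{AZ24}, producing an absorbable $\varepsilon\,\mathbb{E}\bigl[\sup_{t\le\tau}\|(-\Delta)^{\alpha/2}u\|^{2}\bigr]$ plus a time-integral term. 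The nonlinear and $f$-contributions ($\mathcal{J}_{2},\mathcal{J}_{3}$) are bounded via the same mass-critical Gagliardo--Nirenberg inequality you invoke, combined with the exponential mass decay \eqref{Mass}, and yield terms of the form $\int_{0}^{\tau}\mathbb{E}\bigl[\sup_{r\le s}\|u\|_{\dot H^{\alpha}}^{2}\bigr]\,ds$; Gr\"onwall then closes. A gauge-type transformation appears in the paper only later, in Section~4, with the Ornstein--Uhlenbeck process $z(\theta_{t}\omega)$ in place of $W(t)$, for the attractor construction.

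One caveat on your plan: the ``pre-threshold'' step via a Strichartz bootstrap is the least specified part and is where the two arguments really diverge. After your transformation the equation for $v$ is deterministic, so there are no \emph{stochastic} Strichartz estimates left to invoke; and in the focusing mass-critical regime a Strichartz bootstrap of the shape you sketch does not by itself preclude blowup for radial data with mass above the ground-state threshold. The paper sidesteps this entirely: it never runs a Strichartz bootstrap for the a~priori bound, but closes by Gr\"onwall in expectation on $t\mapsto\mathbb{E}\bigl[\sup_{r\le t}\|u(r)\|_{\dot H^{\alpha}}^{2}\bigr]$, so that the Gagliardo--Nirenberg coefficient sits under the time integral rather than on the left-hand side. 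Your pathwise route is cleaner on the stochastic side, but you would need to make the damping-driven bridge across the initial interval explicit for the argument to be complete.
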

\begin{remark}
Theorem \ref{theorem1} implies that a global solution can only be achieved under the condition $\sigma n=2\alpha$ due to technical reasons. Obtaining global well-posedness without this assumption $\sigma n = 2\alpha$ remains an open problem.
\end{remark}
The following result gives.
\begin{theorem}
\label{theorem2}
Assume that $u_0 \in H^{\alpha}$ radial, Assumptions ($\bf{A_{f}}$)   and $\sigma n=2\alpha$ hold. Then the cocycle $\Phi$ of problem \eqref{0lin} has a unique $\mathcal{D}$-pullback randm attractor $\mathcal{A}=\{\mathcal{A}(\tau, \omega): \tau \in \mathbb{R}, \omega \in \Omega\}$ in
 $H^{s}(\mathbb{R}^n)$.
\end{theorem}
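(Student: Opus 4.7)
The plan is to verify the two hypotheses of the abstract random attractor theorem: (i) existence of a closed $\mathcal{D}$-pullback absorbing set in $H^{\alpha}(\mathbb{R}^{n})$, and (ii) $\mathcal{D}$-pullback asymptotic compactness of the cocycle $\Phi$ generated by \eqref{0lin} in $H^{\alpha}(\mathbb{R}^{n})$. First I would reduce the Stratonovich SPDE to a random PDE by introducing the Ornstein-Uhlenbeck process $z(t,\omega)=-\int_{-\infty}^{t}e^{-(t-s)}\,dW(s)$ (or the one canonically associated with the damping $\gamma+\tfrac12$) and setting $v(t)=e^{-z(t)}u(t)$. The usual chain rule shows $v$ solves a pathwise fractional Schrödinger equation with random, tempered coefficients, so that the global well-posedness from Theorem~\ref{theorem1} carries over and $\Phi(t,\tau,\omega,u_{0})$ is a well-defined continuous cocycle.

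Second, I would establish the absorbing set. Testing the equation for $v$ against $v$ in $L^{2}(\mathbb{R}^{n})$ and taking real parts kills the Hamiltonian part $(-\Delta)^{\alpha}v$ and the defocusing nonlinearity after taking imaginary parts appropriately; what remains is the dissipation $-(\gamma+\tfrac12)\|v\|_{L^{2}}^{2}$ together with the $f$-contribution, which by \eqref{assum1} produces the extra friction $-\beta\|v\|_{L^{2}}^{2}$ plus the controllable forcing $\psi_{1}$. A Gronwall argument gives a tempered absorbing ball in $L^{2}(\mathbb{R}^{n})$. To reach $H^{\alpha}$, I would work on the associated energy
\begin{equation*}
E(v)=\tfrac12\|(-\Delta)^{\alpha/2}v\|_{L^{2}}^{2}+\tfrac{1}{2\sigma+2}\|v\|_{L^{2\sigma+2}}^{2\sigma+2},
\end{equation*}
use the mass-critical assumption $\sigma n=2\alpha$ together with the fractional Gagliardo-Nirenberg inequality to control the nonlinear term by the kinetic part, and absorb the Brownian factor $e^{2\sigma z(\theta_{t}\omega)}$ into the exponential decay. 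This yields a $(\tau,\omega)$-dependent ball $K(\tau,\omega)\subset H^{\alpha}(\mathbb{R}^{n})$ that pullback-absorbs every $D\in\mathcal{D}$.

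Third, and this is the main obstacle, I would prove pullback asymptotic compactness in $H^{\alpha}(\mathbb{R}^{n})$; compactness fails for the Sobolev embedding on the whole space, and the nonlocal character of $(-\Delta)^{\alpha}$ makes localization delicate. I would use the tail-estimate method of Wang: pick a smooth radial cut-off $\chi_{R}$ equal to $0$ on $|x|\le R$ and $1$ on $|x|\ge 2R$, multiply the equation for $v$ by $\chi_{R}^{2}\bar v$ and $\chi_{R}^{2}(-\Delta)^{\alpha}\bar v$, and control the commutator $[(-\Delta)^{\alpha},\chi_{R}]$ using the singular-integral representation together with the radiality of $u_{0}$ (which gives better decay of $v$ at infinity). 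The hypothesis $\psi_{1},\psi_{3},\psi_{5},\psi_{7}$ belonging to decaying spaces makes the forcing terms on $\{|x|\ge R\}$ arbitrarily small. This yields
\begin{equation*}
\limsup_{t\to\infty}\sup_{u_{0}\in D(\theta_{-t}\omega)}\int_{|x|\ge R}|(-\Delta)^{\alpha/2}v|^{2}+|v|^{2}\,dx\longrightarrow 0\quad\text{as }R\to\infty.
\end{equation*}

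Finally, I would combine the uniform tails with compactness on the bounded region. On $B_{R}(0)$ the embedding $H^{\alpha}(B_{R})\hookrightarrow L^{2}(B_{R})$ is compact, so from the $H^{\alpha}$-bounded sequence $\{\Phi(t_{n},-t_{n},\theta_{-t_{n}}\omega,x_{n})\}$ one extracts an $L^{2}$-convergent subsequence; the tail estimate upgrades this to $L^{2}(\mathbb{R}^{n})$ convergence, and a standard energy/weak-convergence argument upgrades it further to $H^{\alpha}(\mathbb{R}^{n})$ convergence, using the equation to identify the limit of $\|(-\Delta)^{\alpha/2}\cdot\|_{L^{2}}^{2}$. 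Having verified the absorbing set and asymptotic compactness, the abstract theorem then delivers the unique $\mathcal{D}$-pullback random attractor $\mathcal{A}=\{\mathcal{A}(\tau,\omega)\}$, conjugation by $e^{z(t)}$ transporting the conclusion from $v$ back to $u$. The step I expect to be the hardest is controlling the fractional commutator $[(-\Delta)^{\alpha},\chi_{R}]$ in a way that produces genuine smallness as $R\to\infty$, since $(-\Delta)^{\alpha}$ is nonlocal and the standard integration-by-parts tricks used for $\alpha=1$ are unavailable.
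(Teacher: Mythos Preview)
Your overall architecture---Ornstein--Uhlenbeck substitution, $L^{2}$ absorbing ball, $H^{\alpha}$ absorbing ball, tail estimates via cut-off, compactness on balls, abstract theorem---matches the paper's. Two points of genuine divergence are worth flagging.

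First, a minor but consequential slip: the correct substitution is $v=e^{-i z(\theta_{t}\omega)}u$, a \emph{phase} rotation, not $v=e^{-z}u$. Because the noise enters as $-u\circ dW$ in an equation multiplied by $i$, the phase rotation removes the stochastic term while preserving $|v|=|u|$; in particular the nonlinearity stays $|v|^{2\sigma}v$ and no random amplitude factor $e^{2\sigma z}$ appears. Your plan to ``absorb $e^{2\sigma z(\theta_{t}\omega)}$ into the exponential decay'' is therefore unnecessary, and the $H^{\alpha}$ estimate in the paper is obtained simply by testing against $(-\Delta)^{\alpha}v$ and using \eqref{assum3}--\eqref{assum4}, not via the energy functional you propose.

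Second, and more substantively, your compactness step differs from the paper's and is where your sketch has a real gap. You propose to use the compact embedding $H^{\alpha}(B_{R})\hookrightarrow L^{2}(B_{R})$ to get strong $L^{2}$ convergence, then ``a standard energy/weak-convergence argument'' to upgrade to strong $H^{\alpha}$. For Schr\"odinger-type flows this upgrade (Ball's method) is \emph{not} routine: there is no smoothing, and one must produce an exact energy identity whose right-hand side passes to the limit under merely $L^{2}$-strong/$H^{\alpha}$-weak convergence---nontrivial here because of the nonlocal term $(f,(-\Delta)^{\alpha}v)$. The paper sidesteps this entirely by proving an additional half-derivative of regularity: testing with $(-\Delta)^{\alpha+\frac12}v$ and using the extra hypotheses \eqref{assum5}--\eqref{assum6} on $f_{x}$ yields a uniform bound in $H^{\alpha+\frac12}$ (Lemma~\ref{lem7}), after which the \emph{compact} embedding $H^{\alpha+\frac12}(B_{R})\hookrightarrow H^{\alpha}(B_{R})$ gives strong $H^{\alpha}$ convergence on balls directly. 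This is precisely why the assumptions on $\psi_{6},\psi_{7}$ are present; your outline never invokes them, which is a signal that your compactness mechanism is not the intended one.
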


\section{Existence of global solution}
Let us first recall the definition of an admissible pair. We say a pair $(p, q)$ is admissible if
\begin{equation}
  p \in[2, \infty], \quad q \in[2, \infty), \quad(p, q) \neq\left(2, \frac{4 n-2}{2 n-3}\right), \quad \frac{2\alpha}{p}+\frac{n}{q}=\frac{n}{2}.
\end{equation}

In this subsection, we will use the stopping time technique, contraction mapping theorem in a suitable space, and conservation of mass to establish the existence of a global solution based on the stochastic fractional Strichartz estimates.

\subsection{Radial local theory}

The unitary group $S(t):=e^{-i t(-\Delta)^{\alpha}}$ enjoys several types of Strichartz estimates, for instance, non-radial Strichartz estimates, radial Strichartz estimates, and weighted Strichartz estimates. We only recall here radial Strichartz estimates (see, e.g., Ref. \cite{VDD18}).
\begin{lemma}
\label{0estimate}
(Radial Strichartz estimates) For $n \geq 2$ and $\frac{n}{2 n-1} \leq \alpha<1$, there exists a positive constant $C$ such that the following estimates hold:
  \begin{equation}\label{Strichartz}
  \begin{aligned}
  \left\|e^{-i t(-\Delta)^{\alpha}}\psi\right\|_{L^p\left(\mathbb{R}, L^q\right)} &\leq C\|\psi\|_{L^2},\\
  \left\|\int_{0}^{t} S(t-s) f(s) d s\right\|_{L^{p}\left(\mathbb{R}, L^{q}\right)} & \leq C \|f\|_{L^{\beta^{\prime}}\left(\mathbb{R}, L^{\gamma^{\prime}}\right)},
  \end{aligned}
  \end{equation}
  where $\psi$ and $f$ are radially symmetric, $(p, q)$ and $(\beta, \gamma)$ satisfy the fractional admissible condition, and $\frac{1}{\beta^{\prime}}+\frac{1}{\beta}=\frac{1}{\gamma^{\prime}}+\frac{1}{\gamma}=1$.
\end{lemma}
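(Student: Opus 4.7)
The plan is to derive (\ref{Strichartz}) via the standard $TT^\ast$/Christ-Kiselev scheme, with the essential input being a radial dispersive estimate that exploits the extra angular decay available in dimension $n\geq 2$.

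First I would represent the action of $e^{-it(-\Delta)^\alpha}$ on a radial function $\psi$ through the Fourier-Bessel (Hankel) transform. Writing $\psi(x)=\tilde\psi(|x|)$ and passing to polar coordinates, one obtains
\begin{equation*}
\bigl(e^{-it(-\Delta)^\alpha}\psi\bigr)(r) \;=\; c_n\int_0^\infty e^{-it\rho^{2\alpha}}\,(r\rho)^{-(n-2)/2}\,J_{(n-2)/2}(r\rho)\,\widehat{\psi}_\sharp(\rho)\,\rho^{n-1}\,d\rho,
\end{equation*}
where $J_\nu$ is a Bessel function of the first kind and $\widehat{\psi}_\sharp$ is the Hankel transform of $\tilde\psi$. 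The goal is a dispersive bound of the form $\|e^{-it(-\Delta)^\alpha}\psi\|_{L^\infty_x}\lesssim|t|^{-\delta(n,\alpha)}\|\psi\|_{L^1_x}$ on radial data, with $\delta(n,\alpha)$ strictly larger than the degenerate non-radial exponent. A Littlewood-Paley decomposition reduces matters to frequency-localized pieces, where stationary phase against $-t\rho^{2\alpha}+r\rho$ combined with the large-argument oscillation asymptotics of $J_\nu$ yields a pointwise decay estimate that sums back up. The admissibility threshold $\alpha\geq n/(2n-1)$ is forced exactly here, to balance the Jacobian $\rho^{n-1}d\rho$ against the combined radial and angular oscillation.

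Once the radial dispersive estimate is in hand, I would interpolate it with the trivial $L^2\to L^2$ conservation to obtain an $L^{q'}\to L^q$ decay of rate $|t|^{-\delta\theta(q)}$, and then run the standard $TT^\ast$ argument. The homogeneous bound in (\ref{Strichartz}) is equivalent to boundedness of $TT^\ast f(t)=\int e^{-i(t-s)(-\Delta)^\alpha}f(s)\,ds$ from $L^{p'}_tL^{q'}_x$ to $L^p_tL^q_x$, which reduces via Hardy-Littlewood-Sobolev on the convolution kernel $|t-s|^{-\delta\theta(q)}$ to the fractional scaling identity $2\alpha/p+n/q=n/2$. The inhomogeneous estimate then follows from the Christ-Kiselev lemma for admissible pairs away from the forbidden endpoint $(2,(4n-2)/(2n-3))$, which is the fractional analogue of the Keel-Tao endpoint and is precisely the point that must be excised from the statement.

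The main obstacle is the radial dispersive estimate itself. Because the phase $|\xi|^{2\alpha}$ has degenerate Hessian when $\alpha\neq 1$, the non-radial stationary-phase analysis after a dyadic rescaling gives only a sub-optimal decay, so the whole scheme rests on harvesting the extra angular cancellation from the Bessel factor $J_{(n-2)/2}(r\rho)$ and combining it cleanly with the radial stationary-phase bound in $\rho$. The threshold $\alpha\geq n/(2n-1)$ and the forbidden endpoint are both consequences of exactly this step; once past it, the $TT^\ast$ duality and the Christ-Kiselev argument are routine.
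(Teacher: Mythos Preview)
Your proposal is a reasonable high-level sketch of how one would actually establish the radial Strichartz estimates for the fractional Schr\"odinger propagator: Hankel representation of radial data, a frequency-localized dispersive bound exploiting the extra decay from the Bessel factor, interpolation with $L^2$ conservation, the $TT^\ast$ duality together with Hardy--Littlewood--Sobolev, and Christ--Kiselev for the retarded term away from the forbidden endpoint. This is indeed the architecture behind the known proofs (Guo--Wang, Cho--Ozawa--Xia, and the survey treatment in Dinh's paper), and you correctly identify the radial dispersive estimate as the place where the threshold $\alpha\geq n/(2n-1)$ and the excluded endpoint $(2,\tfrac{4n-2}{2n-3})$ appear.

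The paper, however, does not prove this lemma at all: it is stated without proof and attributed to Ref.~\cite{VDD18} (see the sentence immediately preceding the lemma, ``We only recall here radial Strichartz estimates (see, e.g., Ref.~\cite{VDD18})''). So there is nothing in the paper against which to compare your argument step by step. If you intend to include a self-contained proof, your outline is on the right track, but be aware that the dispersive bound you wrote in the pure $L^1\to L^\infty$ form is a bit schematic: in the actual literature the radial dispersive estimate for $(-\Delta)^\alpha$ with $\alpha<1$ is usually stated after a Littlewood--Paley localization and carries a derivative weight (or is formulated in Besov-type norms), since the phase $|\xi|^{2\alpha}$ has vanishing Hessian at the origin. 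The summation over dyadic shells and the precise decay exponent require more care than your sketch indicates; the cleanest route is to quote the frequency-localized decay from the cited references and then run $TT^\ast$ and Christ--Kiselev exactly as you describe.
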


Next, we will present the local well-posedness of the stochastic fractional nonlinear Schr\"odinger equation \eqref{0lin} with radially symmetric initial data in the energy space $H^{\alpha}$.
\begin{proposition}(Radial local theory).
\label{theorem1}
 Let $n \geq 2$, $\alpha$ be in the interval $\left(\frac{n}{2 n-1}, 1\right)$ and $0<\sigma<\frac{2 \alpha}{n-2 \alpha}$. Let
\begin{equation*}
p=\frac{4 \alpha(\sigma+2)}{\sigma(n-2 \alpha)}, \quad q=\frac{n(\sigma+2)}{n+\sigma \alpha}. \end{equation*}
Then under Assumption ($\bf{A_{f}}$), for any $u_0 \in H^{\alpha}$ radial, there exists a stopping time $\tau^*\left(u_0, \omega\right)$ and a unique solution to \eqref{0lin} starting from $u_0$, which is almost surely in~ $C\left([0, \tau]; H^{\alpha}\right) \cap~L^p\left(0, \tau ; W^{\alpha, q}\right)$ for any $\tau<\tau^*\left(u_0\right)$. Moreover, we have almost surely,
\begin{equation*}
\tau^*\left(u_0, \omega\right)=+\infty \quad \text { or } \quad \limsup _{t\to\tau^*\left(u_0, \omega\right)}|u(t)|_{H^{\alpha}}=+\infty.
\end{equation*}
\end{proposition}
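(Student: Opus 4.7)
My plan is to run the classical Strichartz plus fixed-point scheme for nonlinear Schr\"odinger-type equations, adapted to the present stochastic setting by first converting the Stratonovich problem into a pathwise random PDE and then iterating a contraction argument in a Strichartz-type space after truncating the nonlinearity.

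First, I would remove the multiplicative noise by the phase change $v(t,x) = e^{iW(t)} u(t,x)$. Because $|e^{iW}|\equiv 1$ and the noise enters via the Stratonovich term $-u\circ dW$, a direct application of the It\^o formula shows that $v$ satisfies a pathwise random equation of the form
\begin{equation*}
i\partial_t v - (-\Delta)^{\alpha} v + |v|^{2\sigma} v + i\gamma v = g(t,x,v;\omega),
\end{equation*}
where $g(t,x,v;\omega)=e^{-iW(t,\omega)}f(t,x,e^{iW(t,\omega)}v)$ inherits from $f$ the bounds \eqref{assum2}--\eqref{assum6} up to multiplicative constants. The problem can then be analysed path by path for $\mathbb{P}$-a.e.~$\omega$, the only probabilistic ingredient being the continuity of $t\mapsto W(t,\omega)$, and well-posedness for $u$ will follow by the inverse substitution.

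Next, I would fix the Strichartz space
\begin{equation*}
E_T := C([0,T];H^{\alpha}(\mathbb{R}^n)) \cap L^{p}(0,T; W^{\alpha,q}(\mathbb{R}^n))
\end{equation*}
attached to the radial fractional admissible pair $(p,q)$ of the statement, truncate the nonlinearity by a smooth cutoff $\theta_R(\|v\|_{L^p_t W^{\alpha,q}_x})$, and study the resulting Duhamel map via the radial Strichartz estimates of Lemma~1 applied to both $(p,q)$ and its dual pair. The choice of $(p,q)$ is tailored so that the fractional chain rule combined with the Sobolev embedding $W^{\alpha,q}\hookrightarrow L^{n(\sigma+2)/(n-\sigma\alpha)}$ yields the nonlinear bound
\begin{equation*}
\bigl\||v|^{2\sigma}v - |w|^{2\sigma}w\bigr\|_{L^{p'}_t W^{\alpha,q'}_x} \lesssim \bigl(\|v\|^{2\sigma}_{L^p_t W^{\alpha,q}_x}+\|w\|^{2\sigma}_{L^p_t W^{\alpha,q}_x}\bigr)\|v-w\|_{L^p_t W^{\alpha,q}_x},
\end{equation*}
and the hypotheses \eqref{assum2}--\eqref{assum6} on $f$ (notably the integrability index $p$ imposed on $\psi_3$) provide a matching linear control of $g$ in the dual Strichartz scale. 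Together with the Lipschitz estimate in the $C([0,T];H^{\alpha})$ component, this makes the Duhamel map a contraction on a ball of $E_T$ for $T$ small enough, producing a unique fixed point $v_R$ at each truncation level.

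To conclude, I would define stopping times $\tau_R(\omega):=\inf\{t\geq 0:\|v(\cdot,\omega)\|_{L^p(0,t;W^{\alpha,q})}\geq R\}$, check by uniqueness that the family $(v_R)_{R>0}$ is consistent, and set $\tau^*(u_0,\omega):=\sup_R \tau_R(\omega)$; undoing the phase change gives the required local solution $u$ on $[0,\tau^*)$. The blow-up alternative then follows from a standard bootstrap: if $\|u(\cdot,\omega)\|_{H^{\alpha}}$ stayed bounded up to a finite $\tau^*$, the Strichartz norm would also be finite and the local existence time could be iterated past $\tau^*$, contradicting maximality. The delicate step throughout is the nonlinear estimate on the unbounded domain $\mathbb{R}^n$: without compact Sobolev embeddings, reaching the range $0<\sigma<2\alpha/(n-2\alpha)$ at fractional regularity forces use of the endpoint admissibility in Lemma~1, which is only available under the radial hypothesis (whence the constraint $\alpha>n/(2n-1)$), and it is exactly this restriction that pins down the specific exponents $(p,q)$ and the integrability condition on $\psi_3$.
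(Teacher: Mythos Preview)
Your fixed-point scheme in the Strichartz space $E_T$ with a cutoff $\theta_R$, the nonlinear estimate via the fractional chain rule and the embedding $W^{\alpha,q}\hookrightarrow L^{n(\sigma+2)/(n-\sigma\alpha)}$, and the passage from the truncated solutions to a maximal solution via the increasing stopping times $\tau_R$ are all in line with the paper. The substantive difference is how the noise is handled. The paper follows the de Bouard--Debussche route: it keeps the It\^o mild formulation with the stochastic convolution $\int_0^t S(t-s)\,u(s)\,dW(s)$ and closes the contraction using the \emph{stochastic} Strichartz estimates for the fractional propagator (imported from \cite{AZ24}), so the fixed point is obtained in $L^\rho(\Omega;E_T)$. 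You instead exploit that $W$ is a real scalar Wiener process and enters only through the phase, so the substitution $v=e^{-iW}u$ (your sign should be $e^{-iW}$, not $e^{iW}$, to cancel the Stratonovich term) converts \eqref{0lin} into a random PDE with no stochastic integral, after which only the \emph{deterministic} radial Strichartz estimates of Lemma~\ref{0estimate} are needed and the argument runs path by path. Your route is more elementary and is in fact the transformation the paper itself uses later (Section~4, with the stationary OU process in place of $W$) for the attractor analysis; the paper's stochastic-Strichartz approach, on the other hand, does not rely on the noise being a pure real phase and would survive more general multiplicative perturbations. One point you should make explicit in your version is the adaptedness/measurability of the pathwise solution and of the $\tau_R$: since the fixed point is obtained $\omega$-by-$\omega$, you need continuity of the solution map in $(u_0,W|_{[0,T]})$ (or a measurable-selection argument) to ensure that $\tau_R$ are genuine stopping times; this is routine but should be stated.
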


\begin{proof}
We follow the approach of \cite[Theorem 2.1]{Debussche99}. For the damping term $i\left(\gamma+\frac{1}{2}\right) u$, we can define a new free operator $T_{\gamma,\alpha}\phi:=e^{-\left(\gamma +\frac{1}{2}\right)t}\mathcal{F}^{-1}(e^{-it|y|^{2\alpha}})*\phi$. It has the same Strichartz estimates as the unitary group $S(t):=e^{-it(-\Delta)^{\alpha}}$ (see \cite{Sa15}, Proposition 2.7 and 2.9). So we can ignore the impact of the damping term and still use the unitary group $S(t)$ to represent the mild form of the equation \eqref{0lin} for simplicity.

Let $\theta \in \mathcal{C}_{0}^{\infty}(\mathbb{R})$ with $ \supp \theta \subset(-2,2), \theta(x)=1$ for $x \in[-1,1]$ and $0 \leq \theta(x) \leq 1$ for
$x \in \mathbb{R}$. Let $R>0$ and $\theta_{R}(x)=\theta\left(\frac{x}{R}\right)$. Let us fix $R\geq 1$.
We recall the mild form of equation \eqref{0lin}, that is
\begin{equation*}
\label{ut}
\begin{aligned}
u(t)=& S(t) u_{0}+i \int_{0}^{t} S(t-s)\left(\theta_{R}\left(|u|_{L^{r}\left(0, s ; L_{x}^{p}\right)}\right) |u(s)|^{2\sigma}u(s)\right) ds
+i \int_{0}^{t} S(t-s)u(s) d W(s)\\
&-i\int_{0}^{t} S(t-s)\left(\theta_{R}\left(|u|_{L^{r}\left(0, s ; L_{x}^{p}\right)}\right) f\right)ds.
\end{aligned}
\end{equation*}

It is easy to check that $(p, q)$ satisfies the fractional admissible condition. We choose $(a, b)$ so that
\begin{equation*}
\frac{1}{p^{\prime}}=\frac{1}{p}+\frac{2\sigma}{a}, \quad \frac{1}{q^{\prime}}=\frac{1}{q}+\frac{2\sigma}{b} .
\end{equation*}
We see that
\begin{equation*}
\frac{2\sigma}{a}-\frac{2\sigma}{p}=1-\frac{\sigma(n-2 \alpha)(\sigma+1)}{2 \alpha(\sigma+2)}=: \theta>0, \quad q \leq b=\frac{n q}{n-\alpha q},\quad q<\frac{2 n}{n-\alpha}.
\end{equation*}
The latter gives the Sobolev embedding $W^{\alpha, q} \hookrightarrow L^b$. Define the spaces
\begin{equation*}
\mathcal{X}:=\left\{C\left(I, H^\alpha\right) \cap L^p\left(I, W^{\alpha, q}\right):\|u\|_{L^{\infty}\left(I, H^\alpha\right)}+\|u\|_{L^p\left(I, W^{\alpha, q}\right)} \leq M\right\},
\end{equation*}
where $I=[0, \zeta]$ and $M, \zeta>0$ to be chosen later. Then the proof is based on a fixed point argument. Let us briefly discuss the proof, more details are similar to Theorem 1 in \cite{AZ24}. Here we only need to prove that the different term $i\int_{0}^{t} S(t-s)\left(\theta_{R}\left(|u|_{\mathcal{X}}\right) f\right)ds$ belongs to the space $\mathcal{X}$. For the term $i\int_{0}^{t} S(t-s)\left(\theta_{R}\left(|u|_{\mathcal{X}}\right) f\right)ds$, using the Strichartz estimates,  Assumption ($\bf{A_{f}}$) and H\"older inequality, one has
\begin{equation}
\begin{aligned}
  &\left\|i\int_{0}^{t} S(t-s)\left(\theta_{R}\left(|u|_{X}\right) f\right)ds\right\|_{{L^p\left(I, W^{\alpha, q}\right)}}\\
  &\leq C\left(\theta_{R}\left(|u|_{X}\right)f\right)_{{L^{p^{'}}\left(I, W^{\alpha, {q^{'}}}\right)}}\\
  &\leq C_{R}\left\|f\right\|_{{L^{p^{'}}\left(I, W^{\alpha, {q^{'}}}\right)}}\\
  &\leq C_{R}\|{\psi_2(x)}|u|\|_{{L^{p^{'}}\left(I, W^{\alpha, {q^{'}}}\right)}}+C_{R}\|\psi_3(t,x)\|_{{L^{p^{'}}\left(I, W^{\alpha, {q^{'}}}\right)}} \\
  &\leq CI^{1-2/p}\left(\|\psi_2(x)\|_{W^{\alpha,\frac{q}{q-2}}}\|u\|_{{L^p\left(I, W^{\alpha, q}\right)}}+\|\psi_3(t,x)\|_{{L^p\left(I, W^{\alpha, \frac{n(\sigma+2)}{n+\sigma(n-\alpha)}}\right)}}\right).
\end{aligned}
\end{equation}
Similarly, we can also prove that the term $i\int_{0}^{t} S(t-s)\left(\theta_{R}\left(|u|_{X}\right) f\right)ds$ belongs to the space ${L^{\infty}\left(I, H^{\alpha}\right)}$. \\
Next, by the Strichartz estimates, Assumption ($\bf{A_{f}}$) and H\"older inequality again, we get
\begin{equation}
\begin{aligned}
&\left|\int_{0}^{t} S(t-s)(\theta_{R}\left(|u_1|_{X}\right)f(x,u_1)-\theta_{R}\left(|u_2|_{X}\right)f(x,u_2))ds\right|_{{L^p\left(I, W^{\alpha, q}\right)}}\\
&\leq |\theta_{R}\left(|u_1|_{X}\right)f(x,u_1)-\theta_{R}\left(|u_2|_{X}\right)f(x,u_2)|_{{L^{p^{'}}\left(I, W^{\alpha, {q^{'}}}\right)}}\\
&\leq |\left(\theta_{R}\left(|u_1|_{X}\right)-\theta_{R}\left(|u_2|_{X}\right)\right)f(x,u_2)|_{{L^{p^{'}}\left(I, W^{\alpha, {q^{'}}}\right)}}\\
&\quad +|\theta_{R}\left(|u_1|_{X}\right)\left(f(x,u_1)-f(x,u_2)\right)|_{{L^{p^{'}}\left(I, W^{\alpha, {q^{'}}}\right)}} \\
&\leq C_R\|u_1-u_2\|_{X}\|f(x,u_2)\|_{{L^{p^{'}}\left(I, W^{\alpha, {q^{'}}}\right)}}\\
&\quad +C_{R}\|f(x,u_1)-f(x,u_2)\|_{{L^{p^{'}}\left(I, W^{\alpha, {q^{'}}}\right)}}\\
&\leq  C_{R}I^{1-2/p}\left(\|\psi_2(x)\|_{W^{\alpha,\frac{q}{q-2}}}\|u\|_{{L^p\left(I, W^{\alpha, q}\right)}}+\|\psi_3(t,x)\|_{{L^p\left(I, W^{\alpha, q^{'}}\right)}}\right)\\
&\quad +C_{R}\|\psi_4(x)(u_1-u_2)\|_{L^{r^{\prime}}\left(0, T ; L^{p^{\prime}}(\mathbb{R}^n)\right)}\\
&\leq C_{R}I^{1-2/p}\left(\|\psi_2(x)\|_{W^{\alpha,\frac{q}{q-2}}}\|u\|_{{L^p\left(I, W^{\alpha, q}\right)}}+\|\psi_3(t,x)\|_{{L^p\left(I, W^{\alpha, \frac{n(\sigma+2)}{n+\sigma(n-\alpha)}}\right)}}\right)\\&+C_{R}I^{1-2/p}\left(\|\psi_4(x)\|_{W^{\alpha,\frac{q}{q-2}}}\|u_1-u_2\|_{{L^p\left(I, W^{\alpha, \frac{n(\sigma+2)}{n+\sigma(n-\alpha)}}\right)}}\right).
\end{aligned}
\end{equation}
The estimate in ${L^{\infty}\left(I, H^{\alpha}\right)}$ is similar.
\end{proof}

The fractional nonlinearity Schr\"odinger shares  the similarity with the classical nonlinear Schr\"odinger equation, which has the formal law for the mass and energy by
\begin{equation}
\begin{aligned}
\label{0Energy0}
M[u]&:=\|u\|^{2}_{L^2}, \\
H[u]&:=\frac{1}{2}\int_{\mathbb{R}^n} \left|(-\Delta)^{\frac{\alpha}{2}}u\right|^2dx -\frac{1}{2\sigma +2}\int_{\mathbb{R}^n} |u|^{2\sigma+2}dx.
\end{aligned}
\end{equation}

In the following, we will give the results about the energy $H[u]$.
\begin{proposition}
\label{energy}
Assume that $u_0 \in H^{\alpha}$ radial and Assumption ($\bf{A_{f}}$) holds. For any stopping time $\tau<\tau^{*}(u_0)$, we have
 \begin{equation}
 \label{Mass}
 M(u(\tau))\leq C\left(u_0, \psi_1\right)e^{-2(\gamma+\beta)\tau}, ~~a.s.
 \end{equation}
 and
\begin{equation}\label{0energy1}
\begin{aligned}
H(u(\tau))&=H(u_0)-\mathbf{Im} \int^{\tau}_0 \int_{\mathbb{R}^n} (-\Delta)^{\alpha}\bar{u}udxdW
-\gamma\mathbf{Re}\int^{\tau}_0 \int_{\mathbb{R}^n}\left[(-\Delta)^{\alpha}\bar{u}\right]udxdt
\\
&+\gamma \int^{\tau}_0\int_{\mathbb{R}^n}|u|^{2\sigma+2}dxdt+\mathbf{Im}\int^{\tau}_0\int_{\mathbb{R}^n}\left[(-\Delta)^{\alpha}\bar{u}-|u|^{2\sigma}\bar{u}\right]fdxdt
\end{aligned}
\end{equation}
\end{proposition}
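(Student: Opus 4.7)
The plan is to apply It\^o's formula twice: first to $|u(t,x)|^2$ for the mass bound, then to the Hamiltonian functional $H[u(t)]$ for the energy identity. A preliminary step is to rewrite \eqref{0lin} in It\^o form,
\[
du = \bigl[-i(-\Delta)^{\alpha}u + i|u|^{2\sigma}u - (\gamma+\tfrac{1}{2})u - if(t,x,u)\bigr]\,dt + iu\,dW,
\]
so that the multiplicative noise coefficient $iu$ acts as a pure infinitesimal phase rotation. This structural fact drives both of the cancellations below.

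For the mass bound \eqref{Mass}, expand $d|u|^2 = \bar u\,du + u\,d\bar u + du\cdot d\bar u$ in It\^o form. The martingale contributions from the first two terms cancel because they carry opposite factors $\pm i|u|^2\,dW$, and the quadratic-variation term $du\cdot d\bar u = |u|^2\,dt$ combines with $-(2\gamma+1)|u|^2\,dt$ from the drift to leave $-2\gamma|u|^2\,dt$, precisely the cancellation expected when the Stratonovich convention preserves $L^2$. After integrating in $x$, the dispersive contribution $2\mathbf{Im}\int\bar u(-\Delta)^{\alpha}u\,dx$ vanishes by self-adjointness, and the forcing contributes $2\int\mathbf{Im}(f\bar u)\,dx$. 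Assumption \eqref{assum1} then yields the pointwise (in $\omega$) differential inequality
\[
\frac{d}{dt}\|u\|_{L^2}^2 + 2(\gamma+\beta)\|u\|_{L^2}^2 \leq 2\|\psi_1(t,\cdot)\|_{L^1_x},
\]
and Gronwall's inequality together with $\psi_1 \in L^\infty_tL^1_x$ produces a decay estimate of the form claimed in \eqref{Mass}.

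For the energy identity \eqref{0energy1}, I apply It\^o's formula to $H[u]$ with functional derivative $H'[u] = (-\Delta)^{\alpha}u - |u|^{2\sigma}u$. The Hamiltonian part $-iH'[u]\,dt$ contributes $\mathbf{Re}\int\overline{H'[u]}(-iH'[u])\,dx = -\mathbf{Im}\int|H'[u]|^2\,dx = 0$; the damping $-(\gamma+\tfrac{1}{2})u$ yields $-(\gamma+\tfrac{1}{2})[\|(-\Delta)^{\alpha/2}u\|_{L^2}^2 - \|u\|_{L^{2\sigma+2}}^{2\sigma+2}]\,dt$; the forcing $-if$ yields $\mathbf{Im}\int[(-\Delta)^{\alpha}\bar u - |u|^{2\sigma}\bar u]f\,dx\,dt$; and the noise $iu\,dW$ contributes the martingale term $-\mathbf{Im}\int[(-\Delta)^{\alpha}\bar u]\,u\,dx\,dW$. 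A direct Taylor expansion of $H[(1+i\epsilon)u]$ (using $|(1+i\epsilon)u|^2 = (1+\epsilon^2)|u|^2$) shows $H''[u](iu,iu) = \|(-\Delta)^{\alpha/2}u\|_{L^2}^2 - \|u\|_{L^{2\sigma+2}}^{2\sigma+2}$, so the $\tfrac{1}{2}H''[u](iu,iu)$ It\^o correction exactly cancels the contribution of the $-\tfrac{1}{2}u$ term produced by the Stratonovich-to-It\^o conversion, leaving the clean $\gamma$ coefficients of \eqref{0energy1}. The principal obstacle is regularity: $H$ is not smooth enough on $H^{\alpha}$ for It\^o to apply verbatim, so one first derives the identity for a smooth Galerkin or Yosida approximation $u^\varepsilon$ (as in \cite{Debussche99}) and passes to the limit using the Strichartz bounds $L^p_tW^{\alpha,q}_x$ provided by Proposition \ref{theorem1}, which control the convergence of the critical nonlinear term $|u|^{2\sigma}u$.
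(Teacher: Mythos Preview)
Your approach is essentially the same as the paper's: convert to It\^o form, compute the first and second derivatives of $M$ and $H$, and apply It\^o's formula, using assumption \eqref{assum1} and Gr\"onwall for the mass bound. Your treatment is in fact more careful than the paper's, which simply states the outcome of It\^o's formula without verifying the cancellation of the It\^o correction against the $-\tfrac{1}{2}u$ drift and does not mention the regularization step you flag.
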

\begin{proof}
Note that the equation \eqref{0lin} can be rewritten as
\begin{equation}
\begin{aligned}
du&=-i\left[(-\Delta)^{\alpha}u-|u|^{2\sigma}u\right]dt-\left(\gamma+\frac{1}{2}\right) udt -if(t,x,u)dt-iudW(t).
\end{aligned}
\end{equation}
We can conclude that
\begin{equation}
\begin{aligned}
M^{'}[u]h&=2\mathbf{Re} (u,h),\\
M^{''}(h,k)&=2\mathbf{Re}(k,h).\\
\end{aligned}
\end{equation}
\begin{equation}
\begin{aligned}
H^{'}[u]h&=\mathbf{Re} \int_{\mathbb{R}^n} (-\Delta)^{\frac{\alpha}{2}}\bar{u}(-\Delta)^{\frac{\alpha}{2}}{h}dx-\mathbf{Re}\int_{\mathbb{R}^n} |u|^{2\sigma}\bar{u}hdx
=\mathbf{Re} \int_{\mathbb{R}^n}[(-\Delta)^{\alpha}\bar{u}-|u|^{2\sigma}\bar{u}]hdx,
\end{aligned}
\end{equation}
and
\begin{equation}
\begin{aligned}
H^{''}(u)(h, k)=\mathbf{Re} \int_{\mathbb{R}^n} (-\Delta)^{\frac{\alpha}{2}}\bar{k}(-\Delta)^{\frac{\alpha}{2}}hdx
-\mathbf{ Re} \int_{\mathbb{R}^n} |u|^{2\sigma}\bar{k} hdx-2\sigma \int_{\mathbb{R}^n} |u|^{2\sigma-2}\mathbf{Re}(\bar{u}k)\mathbf{Re}(\bar{u}h)dx.
\end{aligned}
\end{equation}
Using It\^o's formula, we obtain
\begin{equation}
\label{mass}
\begin{aligned}
M(u(t))=M(u_0)+2\mathbf{{Im}}\int^{t}_{0}\left(f(s,x,u),u\right)ds-2\gamma \int^{t}_{0}M(u(s))ds,
\end{aligned}
\end{equation}
and
\begin{equation*}
\begin{aligned}
H(u(\tau))&=H(u_0)-\mathbf{Im} \int^{\tau}_0 \int_{\mathbb{R}^n} (-\Delta)^{\alpha}\bar{u}udxdW
-\gamma\mathbf{Re}\int^{\tau}_0 \int_{\mathbb{R}^n}\left[(-\Delta)^{\alpha}\bar{u}\right]udxdt
\\
&+\gamma \int^{\tau}_0\int_{\mathbb{R}^n}|u|^{2\sigma+2}dxdt+\mathbf{Im}\int^{\tau}_0\int_{\mathbb{R}^n}\left[(-\Delta)^{\alpha}\bar{u}-|u|^{2\sigma}\bar{u}\right]fdxdt
\end{aligned}
\end{equation*}
By the equality \eqref{mass} and Assumption ($\bf{A_{f}}$), we have
\begin{equation}
\begin{aligned}
M(u(\tau))\leq M(u_0)-2(\gamma+\beta)\int^{\tau}_{0}M(u(s))ds+\|\psi_1(\cdot,\cdot)\|_{L^{\infty}_{t}L^{1}_{x}}.
\end{aligned}
\end{equation}
By Gr\"onwall inequality, we have
\begin{equation*}
M(u(\tau))\leq C\left(u_0, \psi_1\right)e^{-2(\gamma+\beta)\tau}.
\end{equation*}
\end{proof}

\subsection {Proof of Theorem \ref{theorem1}}
Proof of Theorem  \ref{theorem1}.
To ensure the global existence of the solution, we need to establish the uniform boundedness of $\|u\|_{H^{\alpha}}$. We first obtain the uniform boundedness of the energy $H[u]$. Then the energy evolution of $u$ implies that for any $T_0>0$, any stopping time $\tau<\inf(T_0, \tau^*(u_0))$ and any time $t\leq \tau$,
\begin{equation*}
\begin{aligned}
\mathbb{E}\left[\sup_{t\leq \tau}H(u(t))\right]&\leq \mathbb{E}\left[H(u_0)\right]+\mathbb{E}\left[\sup_{t\leq \tau} \left|\int^{t}_{0} \int_{\mathbb{R}^n}(-\Delta)^{\alpha}\bar{u}udxdW\right|\right]\\
&-\gamma \mathbb{E}\left[\sup_{t\leq \tau}\int^{t}_0 \int_{\mathbb{R}^n}\left[(-\Delta)^{\alpha}\bar{u}\right]udxds\right]+\gamma \mathbb{E}\left[\int^{\tau}_0\int_{\mathbb{R}^n}|u|^{2\sigma+2}dxdt\right]\\
&+\mathbb{E}\left[\mathbf{Im}\int^{\tau}_0\int_{\mathbb{R}^n}\left[(-\Delta)^{\alpha}\bar{u}-|u|^{2\sigma}\bar{u}\right]fdxdt\right]\\
&:=H(u_0)+\mathcal{J}_1+\mathcal{J}_2+\mathcal{J}_3.
\end{aligned}
\end{equation*}

 Treating the case where $\sigma=\frac{2\alpha}{n}$, we can utilize Gagliardo--Nirenberg's inequality (see \cite[Theorem 3.2]{SH17}) to obtain the following equation:
\begin{equation}
\label{in6}
\begin{aligned}
\|u\|^{2\sigma+2}_{L^{2\sigma+2}}&\leq C_{opt} \|u\|^{\frac{n\sigma}{\alpha}}_{\dot{H}^{\alpha}}\cdot \|u\|^{2\sigma+2-\frac{n\sigma}{\alpha}}_{L^2},
\end{aligned}
\end{equation}
where the best constant is given by
\begin{equation*}
C_{opt}=\frac{\sigma+1}{\|R\|^{2\sigma}},
\end{equation*}
and $R$ is a ground state of
\begin{equation*}
(-\Delta)^{\alpha}R+R-|R|^{2\sigma}R=0, ~~R\in H^{\alpha}.
\end{equation*}

By the inequalities \eqref{0Energy0} and \eqref{in6}, we know
\begin{equation}
\label{erb}
\begin{aligned}
\|u\|^2_{\dot{H}^{\alpha}}&=2H(u)+\frac{1}{\sigma+1}\|u\|^{2\sigma+2}_{L^{2\sigma+2}}
\leq 2H(u)+\frac{\|u\|^{2\sigma}}{\|R\|^{2\sigma}}\|u\|^{2\sigma}_{\dot{H}^{\alpha}}.
\end{aligned}
\end{equation}
Then  the inequality \eqref{erb} and the energy evolution of $u$ imply that for any $T_0>0$, any stopping time $\tau <\inf(T_0, \tau^{*}(u_0))$ and any time $t\leq \tau$.
\begin{equation}
\begin{aligned}
\mathbb{E}\left[\left(1-\frac{\|u(t)\|^{2\sigma}}{\|R\|^{2\sigma}}\right)\|u\|^2_{\dot{H}^{\alpha}}\right]\leq 2\mathbb{E}\left[H(u)\right].
\end{aligned}
\end{equation}

For the term $\mathcal{J}_1$, using the similar technique as
\cite[Theorem 2]{AZ24}, we have
\begin{equation}
\label{J1}
\begin{aligned}
\mathcal{J}_1&:=\mathbb{E}\left[\sup_{t\leq \tau} \left|\int^{t}_{0} \int_{\mathbb{R}^n}(-\Delta)^{\alpha}\bar{u}udxdW\right|\right]\\
&\leq C(n,\alpha,T_0,M(u_0))+C(n,\alpha,T_0,M(u_0))\varepsilon\mathbb{E}\left[\sup_{t\leq \tau}\|(-\Delta)^\frac{\alpha}{2}u\|^2\right]\\
&\quad+C(n,\alpha,T_0,M(u_0), \varepsilon)\int^{\tau}_{0}\mathbb{E}\left(\sup_{r\leq s} \|(-\Delta)^\frac{\alpha}{2}u(r)\|^2\right)ds.
\end{aligned}
\end{equation}
For the term $\mathcal{J}_2$, by the inequalities \eqref{in6} and \eqref{Mass}, we have
\begin{equation}
\label{J3}
\begin{aligned}
\mathcal{J}_2&:=-\gamma \mathbb{E}\left[\sup_{t\leq \tau}\int^{t}_0 \int_{\mathbb{R}^n}\left[(-\Delta)^{\alpha}\bar{u}\right]udxds\right]+\gamma \mathbb{E}\left[\int^{\tau}_0\int_{\mathbb{R}^n}|u|^{2\sigma+2}dxdt\right]\\
&=-\gamma \mathbb{E}\int^{t}_0\left[\sup_{t\leq \tau}\|u\|^2_{\dot{H}^{\alpha}}\right]ds+\gamma \int^{\tau}_0\int_{\mathbb{R}^n}\mathbb{E}\left[|u|^{2\sigma+2}\right]dxdt\\
&\leq \gamma \cdot C_{opt}\int^{\tau}_{0}e^{-(\gamma+\beta)\sigma s}\mathbb{E}\left[\|u\|^2_{\dot{H}^{\alpha}}\right]\|u_0\|^{2\sigma}ds\\
& \leq C\left(\gamma, C_{opt}, \beta, M(u_0)\right)\int^{\tau}_{0}\mathbb{E}[\sup_{r\leq s}\|u\|^2_{\dot{H}^{\alpha}}]ds.
\end{aligned}
\end{equation}
For the term $\mathcal{J}_3$, we have
\begin{equation*}
\begin{aligned}
\mathcal{J}_3&=\mathbb{E}\left[\mathbf{Im}\int^{\tau}_0\int_{\mathbb{R}^n}\left[(-\Delta)^{\alpha}\bar{u}\right]fdxdt-\mathbf{Im}\int^{\tau}_0\int_{\mathbb{R}^n}|u|^{2\sigma}\bar{u}fdxdt\right]
:=\mathcal{J}_{31}+\mathcal{J}_{32}.
\end{aligned}
\end{equation*}
For the term $\mathcal{J}_{31}$, under Assumption ($\bf{A_{f}}$), we have
\begin{equation*}
\begin{aligned}
\mathcal{J}_{31}&= \mathbb{E}\left[\mathbf{Im}\int^{\tau}_0\int_{\mathbb{R}^n}\left[(-\Delta)^{\alpha}\bar{u}\right]fdxdt\right]\\
&= \mathbb{E}\left[\mathbf{Im}\int^{\tau}_0\int_{\mathbb{R}^n}(-\Delta)^{\frac{\alpha}{2}}\bar{u}(-\Delta)^{\frac{\alpha}{2}}fdxdt\right]\\
&=C(n,\alpha)\mathbb{E}\left[\mathbf{Im}\int^{\tau}_0\int_{\mathbb{R}^n}\frac
 {\left[f(t,x,{u}(x))-f(t,y,{u}(y))\right]{(\bar{u}(x)-\bar{u}(y))}}{|x-y|^{n+2\alpha}}dxdyds\right]\\
 &=C(n,\alpha)\mathbb{E}\left[\mathbf{Im}\int^{\tau}_0\int_{\mathbb{R}^n}\frac
 {\left[f(t,x,{u}(x))-f(t,y,{u}(x))\right]{(\bar{u}(x)-\bar{u}(y))}}{|x-y|^{n+2\alpha}}dxdyds\right]\\
 &+C(n,\alpha)\mathbb{E}\left[\mathbf{Im}\int^{\tau}_0\int_{\mathbb{R}^n}\frac
 {\left[f(t,y,u(x))-f(t,y,u(y))\right]{(\bar{u}(x)-\bar{u}(y))}}{|x-y|^{n+2\alpha}}dxdyds\right]\\
 &\leq C(n,\alpha)\left\{\int^{\tau}_0\int_{\mathbb{R}^n}\frac
 {|\psi_5(x)-\psi_5(y)||\bar{u}(x)-\bar{u}(y)|}{|x-y|^{n+2\alpha}}dxdyds+
 \int^{\tau}_0\int_{\mathbb{R}^n}\psi_4(t,x)\frac
 {|\bar{u}(x)-\bar{u}(y)|^2}{|x-y|^{n+2\alpha}}dxdyds\right\}\\
 &\leq C(n,\alpha)\int^{\tau}_0\mathbb{E}\left[\|(-\Delta)^{\alpha/2}u\|^2\right]+C(n,\alpha, \psi_4).
\end{aligned}
\end{equation*}

For the term $\mathcal{J}_{32}$, by Assumption ($\bf{A_{f}}$), Young's inequality, along with the same technique as used in inequality \eqref{J3}, we have
\begin{equation*}
\begin{aligned}
\mathcal{J}_{32}:=\mathbb{E}\left[\left|-\mathbf{Im}\int^{\tau}_0\int_{\mathbb{R}^n}|u|^{2\sigma}\bar{u}fdxdt\right|\right]&\leq \mathbb{E}\left[\int^{\tau}_0\int_{\mathbb{R}^n}|u|^{2\sigma}|\bar{u}||f|dxdt\right]\\
& \leq \mathbb{E}\left[\int^{\tau}_0\int_{\mathbb{R}^n}|u|^{2\sigma+2}\psi_2(x)dxdt\right]+\mathbb{E}\left[\int^{\tau}_0\int_{\mathbb{R}^n}|u|^{2\sigma+1}\psi_3(t,x)dxdt\right]\\
& \leq |\psi_2(x)|_{L^{\infty}_x}\mathbb{E}\left[\int^{\tau}_0\int_{\mathbb{R}^n}|u|^{2\sigma+2}dxdt\right]+\mathbb{E}\left[\int^{\tau}_0\int_{\mathbb{R}^n}|u|^{2\sigma+1}\psi_3(t,x)dxdt\right]\\
& \leq |\psi_2(x)|_{L^{\infty}_x}\int^{\tau}_0\int_{\mathbb{R}^n}|u|^{2\sigma+2}dxdt+C_{\sigma}\int^{\tau}_0\int_{\mathbb{R}^n}|u|^{2\sigma+2}dxdt\\
&+C_{\sigma}\int^{\tau}_0\int_{\mathbb{R}^n}|\psi_3(t,x)|^{2\sigma+1}dxdt.\\
& \leq  C\left(\psi_2, \sigma, C_{opt},\beta, M(u_0)\right)\int^{\tau}_{0}\mathbb{E}[\sup_{r\leq s}\|u\|^2_{\dot{H}^{\alpha}}]ds+C(\sigma, \psi_3).
\end{aligned}
\end{equation*}
Therefore, we have
\begin{equation}
\label{J3}
\mathcal{J}_3 \leq C(\sigma, \psi_3, \psi_4, n, \alpha)+ C\left(\psi_2, \sigma, C_{opt},\beta, M(u_0), n, \alpha\right)\int^{\tau}_{0}\mathbb{E}[\sup_{r\leq s}\|u\|^2_{\dot{H}^{\alpha}}]ds.
\end{equation}
Combining the inequalities \eqref{J1} and \eqref{J3}, and using Gr\"onwall inequality, we obtain
\begin{equation*}
\mathbb{E}\sup_{t\leq \tau}\|u\|^2_{\dot{H}^{\alpha}} \leq C\left( n,\alpha,T_0, M(u_0), C_{opt},\beta,\psi_2,\psi_3,\psi_4\right).
\end{equation*}
These are priori estimates, combined with local well-posedness, imply the global existence of a unique solution.

\section{Existence of random attractors in $H^{\alpha}(\mathbb{R}^n)$}
This section is devoted to
show that the solution to a specific class of damped stochastic fractional Schr\"odinger equation defines an infinite-dimensional dynamical system, which possesses a global attractor in
$H^{\alpha}(\mathbb{R}^n)$.

\subsection{ Uniform tail estimates}

The uniform estimates of the tails of solutions are essential for proving the asymptotic compactness of the solution operators in $H^\alpha(\mathbb{R}^n)$.

For $t\in \mathbb{R}$, let
$\theta_t: \Omega \rightarrow \Omega$ be the classical transformation given by
\begin{equation}
\theta_t\omega(\cdot)=
\omega(\cdot+t)-\omega(t), ~~\omega \in \Omega.
\end{equation}
Let $z: \Omega \rightarrow \mathbb{R}$ be defined by
$z(\omega)=-\int^{0}_{-\infty}e^{\tau}\omega(\tau)d\tau$.
Then we know that
$y(t, \omega) = z(\theta_t \omega)$ is the unique stationary solution of the following equation
\begin{equation}
dy=-ydt+dW(t).
\end{equation}

Let $v(t,0,\omega,v_0):=e^{-i z(\theta_t\omega)}u(t,0,\omega,u_0)=e^{-i z(\theta_t\omega)}u(t,\omega,u_0)$. Then the equation \eqref{0lin} can be rewritten as
\begin{equation}
\label{lin5}
\left\{
\begin{aligned}
&dv+i(-\Delta)^{\alpha} v dt+\gamma vdt=ivz(\theta_t\omega)+i|v|^{2\sigma}v-if(t,x,e^{i z(\theta_t\omega)}v)e^{-i z(\theta_t\omega)}, \\
&v(0,x)=v_0(x).
\end{aligned}
\right.
\end{equation}
 Define a continuous cocycle $\Phi$ for problem (\ref{lin5}), which is given by $\Phi:\mathbb{R}^{+}\times \mathbb{R}\times \Omega \times H^{\alpha}(\mathbb{R}^n)\rightarrow H^{\alpha}(\mathbb{R}^n)$ and for every
$t\in \mathbb{R}^{+}, \tau \in \mathbb{R},  \omega\in \Omega$ and
$u_{\tau}\in H^{\alpha}(\mathbb{R}^n)$,
$\Phi(t,\tau,\omega, u_{\tau})=u(t+\tau,\tau, \theta_{-\tau}\omega, u_{\tau})=e^{i z(\theta_t\omega)}v(t+\tau,\tau,\theta_{-\tau}\omega,v_{\tau})$.

Next, we give the uniform estimates of solutions in $L^2({\mathbb{R}}^n).$
\begin{lemma}\label{linli1}
Let $u_0$, $\alpha$, $\sigma$ and $f$ be as in Theorem \ref{theorem1}. Then for every $\sigma_0\in\mathbb{R},
\omega\in \Omega$ and $D=\{D(\omega):
\omega\in \Omega\}\in \mathcal{D}$, there exists a positive constant $T=T(\omega,D,\sigma_0,\alpha)$
such that for all $t\geq T$, the solution $v$ of system  (\ref{lin5}) satisfies
\begin{align}
\begin{split}
||v(\sigma_0,-t,\omega,v_{-t})||^2&+\left(\frac{3}{4}\gamma+2\beta\right)\int_{-t}^{\sigma_0} e^{\frac{5}{4}
\gamma(s-\sigma_0)}||v(s,-t,\omega,v_{-t})||^2ds
\leq C+C\int_{-\infty}^{\sigma_0} e^{\frac{5}{4}
\gamma(s-\sigma_0)}||\psi_1(s, \cdot)||_{L^1_x}ds,
\end{split}
\end{align}
where $v_{-t}\in D(-t,\theta_{-t}\omega)$.
\begin{proof}
  Taking the inner product of equation (\ref{lin5}) with $v$ in the space  $L_2(\mathbb{R}^n)$, we obtain
  \begin{align}
\begin{split}
\frac{d}{dt}&\|v(t,\omega,v_0)\|^2+iC(n,s)\|v\|_{\dot{H}^\alpha}^2+2\gamma\|v\|^2
=2iz(\theta_{t}\omega)\|v\|^2+2i\|v\|_{L^{2\sigma+2}}^{2\sigma+2}-2ie^{-iz}\int_{\mathbb{R}^n} f\overline{v}dx.
\end{split}
\end{align}
Then taking the real part, we have
$$\frac{d}{dt}||v(t,\omega,v_0)||^2+2\gamma||v||^2=2\mathbf{Im}\left(e^{-iz}\int_{\mathbb{R}^n} f\overline{v}dx\right).$$
Under Assumption($\bf{A_{f}}$), we have
\begin{align}
\begin{split}
2\mathbf{Im}\left(e^{-iz}\int_{\mathbb{R}^n} f\overline{v}dx\right)&=2\mathbf{Im}\left(\int_{\mathbb{R}^n} f(t,x,e^{iz(\theta_t\omega)}v)\overline{e^{iz(\theta_t\omega)}v}dx\right)
\leq2\int_{\mathbb{R}^n}\left(-\beta|v|^2+\psi_1(s,x)\right)dx.
\end{split}
\end{align}
Therefore we have

$$\frac{d}{dt}||v(t,\omega,v_0)||^2+\frac{5}{4}\gamma||v||^2+
\frac{3}{4}\gamma||v||^2+2\beta\int_{\mathbb{R}^n}|v|^2dx\leq
2\int_{\mathbb{R}^n}\psi_1(t,x)dx.$$
By multiplying both sides by \( e^{\frac{5}{4} \gamma t} \) and then integrating the inequality over the interval \((-t, \sigma_0)\), we derive
\begin{align}
\begin{split}
||v(\sigma_0,-t,\omega,v_{-t})||^2&+\left(\frac{3}{4}\gamma+2\beta\right)\int_{-t}^{\sigma_0} e^{\frac{5}{4}
\gamma(s-\sigma_0)}\|v(s,-t,\omega,v_{-t})\|^2ds\\
&\leq e^{\frac{5}{4}
\gamma(-t-\sigma_0)}||v_{-t}||^2+2\int_{-t}^{\sigma_0} e^{\frac{5}{4}
\gamma(s-\sigma_0)}\|\psi_1(s,\cdot)\|_{L^1_{x}}ds.
\end{split}
\end{align}
Since $D \in \mathcal{D}$  is tempered, then there exists a constant $T_1>0$ such that for all $t\geq T_1$,
\begin{equation}
e^{\frac{5}{4}\gamma(-t-\sigma_0)}||v_{-t}||^2\leq 1.
\end{equation}
Then the desired estimates follow immediately.
\end{proof}
\end{lemma}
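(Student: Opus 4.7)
The plan is to obtain a pathwise energy inequality for $\|v(t)\|^2_{L^2}$ in the auxiliary equation \eqref{lin5}, integrate it against a carefully chosen exponential weight, and then absorb the contribution of the initial datum via the temperedness of $D$. A key structural observation is that, because the Stratonovich noise has already been removed by the substitution $v = e^{-iz(\theta_t\omega)}u$, equation \eqref{lin5} is a random PDE with no martingale part, so no It\^o formula is needed here and an ODE-type energy argument can be used pathwise.

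First I would pair equation \eqref{lin5} with $v$ in $L^2(\mathbb{R}^n)$. The three contributions $i(-\Delta)^\alpha v$, $ivz(\theta_t\omega)$ and $i|v|^{2\sigma}v$ are each of the form ``$i$ times a self-adjoint object evaluated at $v$'', so their pairings with $v$ are purely imaginary and disappear after taking real parts. What survives is
\begin{equation*}
\frac{d}{dt}\|v\|^2 + 2\gamma\|v\|^2 = 2\,\mathbf{Im}\!\left(e^{-iz(\theta_t\omega)}\int_{\mathbb{R}^n} f\bigl(t,x,e^{iz(\theta_t\omega)}v\bigr)\,\bar{v}\,dx\right).
\end{equation*}
Writing $w = e^{iz(\theta_t\omega)}v$, so that $|w|=|v|$ and $e^{-iz(\theta_t\omega)}\bar{v} = \bar{w}$, hypothesis \eqref{assum1} yields $\mathbf{Im}\bigl(f(t,x,w)\bar{w}\bigr) \leq -\beta|v|^2 + \psi_1(t,x)$, whence
\begin{equation*}
\frac{d}{dt}\|v\|^2 + (2\gamma+2\beta)\|v\|^2 \leq 2\,\|\psi_1(t,\cdot)\|_{L^1_x}.
\end{equation*}

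Next I would split $2\gamma+2\beta = \tfrac{5}{4}\gamma + (\tfrac{3}{4}\gamma + 2\beta)$, multiply by the integrating factor $e^{\frac{5}{4}\gamma t}$ and integrate over $[-t,\sigma_0]$. After dividing by $e^{\frac{5}{4}\gamma \sigma_0}$ this gives
\begin{equation*}
\|v(\sigma_0)\|^2 + \left(\tfrac{3}{4}\gamma+2\beta\right)\int_{-t}^{\sigma_0} e^{\frac{5}{4}\gamma(s-\sigma_0)}\|v(s)\|^2\,ds \leq e^{\frac{5}{4}\gamma(-t-\sigma_0)}\|v_{-t}\|^2 + 2\int_{-t}^{\sigma_0} e^{\frac{5}{4}\gamma(s-\sigma_0)}\|\psi_1(s,\cdot)\|_{L^1_x}\,ds.
\end{equation*}
Since $v_{-t}\in D(-t,\theta_{-t}\omega)$ and $D\in\mathcal{D}$ is tempered, applying the definition of temperedness with $\varepsilon = \tfrac{5}{4}\gamma$ provides $T = T(\omega,D,\sigma_0,\alpha)>0$ such that $e^{\frac{5}{4}\gamma(-t-\sigma_0)}\|v_{-t}\|^2 \leq 1$ for all $t\geq T$. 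Extending the $\psi_1$-integral up to $-\infty$ (permissible because the integrand is nonnegative) then produces the claimed bound with an absolute constant $C$.

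I expect the only subtle point to be the clean cancellation of the dispersive, nonlinear and transformed-noise terms when the real part is taken: one must check that $(-\Delta)^\alpha$ is self-adjoint in the real-part inner product used in the paper, and that the cubic-type term $|v|^{2\sigma}v$ paired with $v$ is real-valued. Once these elementary identities are in hand, the remainder is a routine Gr\"onwall-type manipulation combined with the definition of $\mathcal{D}$.
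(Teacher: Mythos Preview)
Your proposal is correct and follows essentially the same route as the paper: pair \eqref{lin5} with $v$, observe that the dispersive, nonlinear and $z$-terms drop out upon taking real parts, apply \eqref{assum1} to the $f$-term, split the coefficient as $\tfrac{5}{4}\gamma + (\tfrac{3}{4}\gamma+2\beta)$, integrate against $e^{\frac{5}{4}\gamma t}$ over $(-t,\sigma_0)$, and invoke temperedness to absorb the initial data. Your write-up is in fact slightly more careful than the paper's in justifying the cancellations and in noting the extension of the $\psi_1$-integral to $(-\infty,\sigma_0]$.
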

We now derive uniform estimates of solutions in $H^\alpha(\mathbb{R}^n).$
\begin{lemma}\label{lem2}
  Let $u_0$, $\alpha$, $\sigma$ and $f$  be as in Theorem \ref{theorem1}. Then for every $
\omega\in \Omega$ and $D=\{D(\omega):
\omega\in \Omega\}\in \mathcal{D}$, there exists a positive constant $T=T(\omega,D,\alpha)$
 such that for all $t\geq T$ and $\sigma_0\in[-1,0]$, the solution $v$ of system  (\ref{lin5}) with~$v_{-t}\in D(-t,\theta_{-t}\omega)$ satisfies
 \begin{equation*}
 ||v(\sigma_0,-t,\omega,v_{-t})||^2_{H^\alpha}
\leq C(n,\alpha, \sigma_0,\gamma, \psi_4, \psi_5).
\end{equation*}
\begin{proof}
Multiplying (\ref{lin5}) by $(-\Delta)^{\alpha}v$, we obtain
 $$\frac{d}{dt}||(-\Delta)^{\alpha/2}v||^2+2\gamma||(-\Delta)^{\alpha/2}v||^2=2\mathbf{Im}e^{-iz} (f,(-\Delta)^{\alpha}v).$$
Using the definition of fractional Laplacian operator $(-\Delta)^{\alpha}$ and  Assumption($\bf{A_{f}}$), we have
\begin{align}
\begin{split}
 2\mathbf{Im}e^{-iz} \left(f,(-\Delta)^{\alpha}v\right)&=2C(n,\alpha)\mathbf{Im}e^{-iz}\int_{\mathbb{R}^n}\int_{\mathbb{R}^n}\frac
 {(f(t,x,e^{iz}v(x))-f(t,y,e^{iz}v(y)))\overline{(v(x)-v(y))}}{|x-y|^{n+2\alpha}}dxdy\\
 &=2C(n,\alpha)\mathbf{Im}e^{-iz}\int_{\mathbb{R}^n}\int_{\mathbb{R}^n}\frac
 {(f(t,x,e^{iz}v(x))-f(t,y,e^{iz}v(x)))\overline{(v(x)-v(y))}}{|x-y|^{n+2\alpha}}dxdy\\
 &+2C(n,\alpha)\mathbf{Im}e^{-iz}\int_{\mathbb{R}^n}\int_{\mathbb{R}^n}\frac
 {(f(t,y,e^{iz}v(x))-f(t,y,e^{iz}v(y)))\overline{(v(x)-v(y))}}{|x-y|^{n+2\alpha}}dxdy\\
 &\leq C(n,\alpha)\left\{\int_{\mathbb{R}^n}\int_{\mathbb{R}^n}\frac
 {|\psi_5(x)-\psi_5(y)||v(x)-v(y)|}{|x-y|^{n+2\alpha}}dxdy+
 \int_{\mathbb{R}^n}\int_{\mathbb{R}^n}\psi_4(x)\frac
 {|v(x)-v(y)|^2}{|x-y|^{n+2\alpha}}dxdy\right\}\\
 &\leq C\left(n,\alpha, \psi_4, \psi_5\right)\|(-\Delta)^{\alpha/2}v\|^2+C\left(n,\alpha, \psi_4, \psi_5\right).
 \end{split}
\end{align}

Thus we have
$$\frac{d}{dt}||(-\Delta)^{\alpha/2}v||^2+2\gamma||(-\Delta)^{\alpha/2}v||^2\leq
C(n,\alpha, \psi_4, \psi_5)||(-\Delta)^{\alpha/2}v||^2+C(n,\alpha, \psi_4, \psi_5).$$
Let $\sigma_0\in(-1,0)$ and $r\in(-2,-1).$ Multiplying by $e^{\frac{5}{4}\gamma t}$
and integrating over $(r,\sigma_0)$, we infer that
\begin{align}
\begin{split}
||(-\Delta)^{\alpha/2}v(\sigma_0,-t,\omega,v_{-t})||^2&\leq e^{\frac{5}{4}\gamma
(r-\sigma_0)}||(-\Delta)^{\alpha/2}v(r,-t,\omega,v_{-t})||^2\\
&+C(n,\alpha, \psi_4, \psi_5)\int_r^{\sigma_0}
e^{\frac{5}{4}\gamma(\zeta-\sigma_0)}||(-\Delta)^{\alpha/2}v(\zeta,-t,\omega,v_{-t})||^2d\zeta\\
&+C(n,\alpha, \psi_4, \psi_5)\int_r^{\sigma_0} e^{\frac{5}{4}\gamma(\zeta-\sigma_0)}d\zeta.
\end{split}
\end{align}
Integrating with respect to $r$ on $(-2,-1)$, we obtain
\begin{align}
\begin{split}
||(-\Delta)^{\alpha/2}v(\sigma_0,-t,\omega,v_{-t})||^2&\leq \int_{-2}^{-1}e^{\frac{5}{4}\gamma
(r-\sigma_0)}\|(-\Delta)^{\alpha/2}v(r,-t,\omega,v_{-t})\|^2dr\\
&+C(n,\alpha, \psi_4, \psi_5)\int_{-2}^{-1}\int_r^{\sigma_0}
e^{\frac{5}{4}\gamma(\zeta-\sigma_0)}||(-\Delta)^{\alpha/2}v(\zeta,-t,\omega,v_{-t})||^2d\zeta dr\\
&+C(n,\alpha, \psi_4, \psi_5)\int_{-2}^{-1}\int_r^{\sigma_0} e^{\frac{5}{4}\gamma(\zeta-\sigma_0)}d\zeta dr\\
&\leq \int_{-2}^{-1}e^{\frac{5}{4}\gamma
(r-\sigma_0)}||(-\Delta)^{\alpha/2}v(r,-t,\omega,v_{-t})||^2dr\\
&+C(n,\alpha, \psi_4, \psi_5)\int_{-2}^{\sigma_0}
e^{\frac{5}{4}\gamma(\zeta-\sigma_0)}||(-\Delta)^{\alpha/2}v(\zeta,-t,\omega,v_{-t})||^2d\zeta\\
&+C(n,\alpha, \psi_4, \psi_5)\int_{-2}^{\sigma_0}e^{\frac{5}{4}\gamma(\zeta-\sigma_0)}d\zeta.
\end{split}
\end{align}
Since $\sigma_0\in(-1,0)$, we get
\begin{align}
\begin{split}
\|(-\Delta)^{\alpha/2}v(\sigma_0,-t,\omega,v_{-t})\|^2&\leq \left(1+C(n,\alpha, \psi_4, \psi_5)\right)
\int_{-2}^{\sigma_0} e^{\frac{5}{4}\gamma(\zeta-\sigma_0)}||(-\Delta)^{\alpha/2}v(\zeta,-t,\omega,v_{-t})||^2d\zeta \\&+C(n,\alpha, \sigma_0,\gamma, \psi_4, \psi_5).
\end{split}
\end{align}
Using Gr\"onwall inequality, we have
$$||(-\Delta)^{\alpha/2}v(\sigma_0,-t,\omega,v_{-t})||^2\leq C(n,\alpha, \sigma_0,\gamma, \psi_4, \psi_5).$$
\end{proof}
\end{lemma}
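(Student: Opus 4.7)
The plan is to derive a differential inequality for $\Phi(t):=\|(-\Delta)^{\alpha/2}v(t)\|^2$ along the flow of the transformed equation \eqref{lin5}, combine it with the $L^2$ bound of Lemma \ref{linli1}, and then eliminate the dependence on the arbitrary tempered datum $v_{-t}$ through a standard time-averaging trick.

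First I would pair equation \eqref{lin5} with $(-\Delta)^\alpha v$ in $L^2(\mathbb{R}^n)$ and take the real part. Because $(v,(-\Delta)^\alpha v)_{L^2}=\|(-\Delta)^{\alpha/2}v\|^2$ is real, the contributions from $-i(-\Delta)^\alpha v$ and $iv\,z(\theta_t\omega)$ drop out; the damping yields $-2\gamma\Phi$; the nonlinear contribution $-2\mathbf{Im}\int |v|^{2\sigma}v\,\overline{(-\Delta)^\alpha v}\,dx$ must be absorbed using the mass-critical Gagliardo--Nirenberg inequality \eqref{in6} together with the $L^2$ bound of Lemma \ref{linli1}, which under the assumption $\sigma n=2\alpha$ makes $\|v\|_{L^2}^{2\sigma}/\|R\|^{2\sigma}$ small for $t$ large enough and thus produces only a fraction of $\gamma\Phi$ on the right-hand side.

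For the forcing, I would use the singular-integral representation of the fractional Laplacian to write
\begin{equation*}
(f,(-\Delta)^\alpha v)=C(n,\alpha)\int_{\mathbb{R}^n}\!\!\int_{\mathbb{R}^n}\frac{\bigl(f(t,x,e^{iz}v(x))-f(t,y,e^{iz}v(y))\bigr)\,\overline{(v(x)-v(y))}}{|x-y|^{n+2\alpha}}\,dx\,dy.
\end{equation*}
Inserting $\pm f(t,y,e^{iz}v(x))$ splits the numerator into a spatial-increment piece and a solution-increment piece, to which Assumption $(\mathbf{A_f})$ supplies pointwise bounds by $|\psi_5(x)-\psi_5(y)|$ and by $\psi_4(x)$, respectively. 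Cauchy--Schwarz on the double integrals then produces
\begin{equation*}
\bigl|2\mathbf{Im}\,e^{-iz}(f,(-\Delta)^\alpha v)\bigr|\le C(n,\alpha,\psi_4,\psi_5)\bigl(1+\Phi\bigr),
\end{equation*}
giving a linear ODI of the shape $\tfrac{d}{dt}\Phi+2\gamma\Phi\le C_1\Phi+C_2$.

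To close the estimate, I would multiply by the integrating factor $e^{5\gamma t/4}$, integrate over $(r,\sigma_0)$, and then average $r$ over $(-2,-1)$, which converts the pointwise initial-data residual $e^{5\gamma(r-\sigma_0)/4}\Phi(r)$ into a time integral $\int_{-2}^{-1}\Phi(r)\,dr$; a Gr\"onwall step applied to the resulting self-referential relation then yields the desired uniform bound $C(n,\alpha,\sigma_0,\gamma,\psi_4,\psi_5)$. The principal obstacle I anticipate is controlling the nonlinear term without introducing an $\dot H^\alpha$ coefficient that exceeds $\gamma$: this forces essential use of the critical scaling $\sigma n=2\alpha$, together with the $L^2$ smallness of $v$ for $t\ge T$ delivered by Lemma \ref{linli1}. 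A secondary technical point is that the averaging step requires $\int_{-2}^{-1}\Phi(r)\,dr$ to be already controlled by tempered data, which must be verified by applying the same differential inequality on a slightly earlier window before iterating to reach $\sigma_0\in[-1,0]$.
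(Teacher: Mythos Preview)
Your skeleton is the paper's: pair \eqref{lin5} with $(-\Delta)^\alpha v$, rewrite $(f,(-\Delta)^\alpha v)$ via the singular-integral form, split by inserting $\pm f(t,y,e^{iz}v(x))$ and invoke \eqref{assum3}--\eqref{assum4} to land on a linear ODI in $\Phi(t)=\|(-\Delta)^{\alpha/2}v\|^2$, then multiply by $e^{5\gamma t/4}$, integrate on $(r,\sigma_0)$, average $r$ over $(-2,-1)$, and close by Gr\"onwall.

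The one substantive divergence is the nonlinear term. In the paper's proof the contribution of $i|v|^{2\sigma}v$ simply does not appear in the energy identity: the displayed balance is exactly
\[
\frac{d}{dt}\|(-\Delta)^{\alpha/2}v\|^2+2\gamma\|(-\Delta)^{\alpha/2}v\|^2=2\mathbf{Im}\,e^{-iz}(f,(-\Delta)^\alpha v),
\]
with no nonlinear residue to absorb and hence no appeal to \eqref{in6}, to $\sigma n=2\alpha$, or to the $L^2$-smallness from Lemma~\ref{linli1}. You instead keep $-2\,\mathbf{Im}\!\int|v|^{2\sigma}v\,\overline{(-\Delta)^\alpha v}\,dx$ and propose to absorb it by mass-critical Gagliardo--Nirenberg plus the $L^2$ bound. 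Be aware that \eqref{in6} controls $\|v\|_{L^{2\sigma+2}}^{2\sigma+2}$, not this mixed bilinear term; after symmetrisation the latter reads
\[
\tfrac{C(n,\alpha)}{2}\!\int\!\!\int\frac{(|v(y)|^{2\sigma}-|v(x)|^{2\sigma})\,\mathbf{Im}\bigl(v(x)\overline{v(y)}\bigr)}{|x-y|^{n+2\alpha}}\,dxdy,
\]
and turning this into a small multiple of $\|v\|_{\dot H^\alpha}^2$ with the smallness supplied by $\|v\|_{L^2}^{2\sigma}$ alone would need a fractional Leibniz/commutator estimate together with a Sobolev embedding, not just the scalar Gagliardo--Nirenberg inequality. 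So the part of your plan that goes beyond the paper is precisely the step whose details are least clear; the paper's route bypasses it entirely by dropping the term. Your secondary worry about needing $\int_{-2}^{-1}\Phi(r)\,dr$ before Gr\"onwall is legitimate; the paper just writes ``Gr\"onwall'' at that point, and the iteration you sketch is the standard uniform-Gr\"onwall device behind it.
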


\begin{lemma}\label{lemma333}
 Let $u_0$, $\alpha$, $\sigma$ and $f$  be as in Theorem \ref{theorem1}. Then for every $
\omega\in \Omega$ and $D=\{D(\tau,\omega):
\omega\in \Omega\}\in \mathcal{D}$, there exists a positive constant $T=T(\omega,D,\alpha)\geq2$
 such that for all $t\geq T$ and $\sigma_0\in[-1,0]$, the solution $v$ of the system  (\ref{lin5}) with $v_{-t}\in D(-t,\theta_{-t}\omega)$ satisfies
 \begin{align}\label{lin6}
\begin{split}
\int_{-1}^0\|v(s,-t,\omega,v_{-t})\|_{L^{2}(\mathbb{R})}^{2}ds
&+\int_{-1}^0\left\|\frac{\partial v}{\partial s}(s,-t,\omega,v_{-t})\right\|^2ds
\leq C+C \int_{-\infty}^0
 e^{\frac{5}{4}\gamma(s+1)}||\psi_1(s,\cdot)||_{L^1_x}ds.
 \end{split}
\end{align}
 \begin{proof}
  Multiplying by $v$ and then integrating on $\mathbb{R}^n$,  we get
\begin{align*}
\begin{split}
  \frac{1}{2}\frac{d}{dt}\|v|\|^{2}_{L^2}+\gamma\|v\|^{2}_{L^2}&=\mathbf{Im}e^{-iz}\left(f(t,x,e^{iz}v),v\right)
  \leq-\beta \|v\|_{L^{2}}^{2}+ \int_{\mathbb{R}^n}\psi_1(t,x)dx
  \\&\leq-\beta ||v||_{L^{2}}^{2}+\frac{3}{8}\gamma\|v\|_{L^2}^2+C\int_{\mathbb{R}^n}|\psi_1(t,x)|dx.
\end{split}
\end{align*}
Then
\begin{equation}
\label{lin3}
\frac{d}{dt}||v||_{L^2}^2+\frac{5}{4}\gamma \|v\|_{L^2}^2+2\beta ||v||_{L^{2}}^{2}\leq C(\psi_1).
\end{equation}

Now, multiplying (\ref{lin3}) by   $e^{\frac{5}{4}\gamma t}$
and integrating on $(-1,0)$, we can get
\begin{equation}
\label{res}
2\beta\int_{-1}^0e^{\frac{5}{4}\gamma s}||v(s,-t,\omega,v_{-t})||_{L^{2}(\mathbb{R}^n)}^{2}ds
\leq e^{-\frac{5}{4}\gamma }||v(-1,-t,\omega,v_{-t})||_{L^2(\mathbb{R}^n)}^{2}+C(\psi_1, \gamma).
\end{equation}
 Using the inequality \eqref{res} and Lemma \ref{linli1} with $\sigma_0=-1$, we know that there exists a constant $C$ such that for all $t\geq T_1$,
\begin{equation}
\int_{-1}^0||v(s,-t,\omega,v_{-t})||_{L^{2}(\mathbb{R}^n)}^{2}ds
\leq C+C \int_{-\infty}^0
 e^{\frac{5}{4}\gamma s}||\psi_1(s,\cdot)||_{L^1}ds.
\end{equation}
 Next, we derive a uniform estimate on  $\frac{\partial v}{\partial t}$. Multiplying (\ref{lin5}) by $\frac{\partial v}{\partial t}$ and using  Assumption($\bf{A_{f}}$), we have
 \begin{align}
\begin{split}
\left\|\frac{\partial v}{\partial t}\right\|^2=-\gamma\left(v,\frac{\partial v}{\partial t}\right)+\mathbf{Im}\left(fe^{-iz},\frac{\partial v}{\partial t}\right)&\leq\frac{3}{8}\left\|\frac{\partial v}{\partial t}\right\|^2_{}{}+2\gamma\|v\|^2
+4\|\psi_2(t,\cdot)\|^2_{L^\infty_{x}}\|v\|_{L^{2}}^{2}+4\|\psi_3(t, \cdot)\|^2\\
&\leq \frac{16}{5}\gamma\|v\|^2
+\frac{32}{5}\|\psi_2(t, \cdot)\|^2_{L^\infty_x}\|v\|_{L^{2}_x}^{2}+\frac{32}{5}\|\psi_3(t, \cdot)\|^2_{L^{2}}.\\
\end{split}
\end{align}
Integrating on $(-1,0)$ and using  Lemma \ref{linli1}, we get
 \begin{align}
\begin{split}
\int_{-1}^0\left\|\frac{\partial v}{\partial s}(s,-t,\omega,v_{-t})\right\|^2ds&\leq \frac{16}{5}\int_{-1}^0\gamma\|v\|^2ds
+\frac{32}{5}\int_{-1}^0\|\psi_2(s,\cdot)\|^2_{L^\infty_x}\|v\|_{L^{2}_x}^{2}ds+
\frac{32}{5}\int_{-1}^0\|\psi_3(s, \cdot)\|^2ds\\
&\leq C+C \int_{-\infty}^0
 e^{\frac{5}{4}\gamma(s+1)}||\psi_1||_{L^1}ds.
\end{split}
\end{align}
The proof is completed.
 \end{proof}
\end{lemma}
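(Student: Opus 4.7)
The plan is to prove the two space-time $L^2$ integrals separately: first a bound on $\int_{-1}^0 \|v(s)\|_{L^2}^2\, ds$, then a bound on $\int_{-1}^0 \|\partial_s v(s)\|_{L^2}^2\, ds$. Both start from the $L^2$ inner product of equation (\ref{lin5}) against an appropriate test function, followed by extraction of the real part.

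For the first bound, I would pair (\ref{lin5}) in $L^2$ with $v$ itself. Because $(-\Delta)^{\alpha}$ is self-adjoint, the contributions of $-i(-\Delta)^{\alpha}v$, $iz(\theta_t\omega)v$ and $i|v|^{2\sigma}v$ are all purely imaginary and vanish upon taking the real part. Applying (\ref{assum1}) to the forcing term after the phase substitution $u = e^{iz}v$, and splitting the damping as $\gamma = \tfrac{5}{8}\gamma + \tfrac{3}{8}\gamma$ to absorb a harmless multiple of $\|v\|^2$, I arrive at
$$\frac{d}{dt}\|v\|^2 + \frac{5}{4}\gamma\|v\|^2 + 2\beta\|v\|^2 \le C\|\psi_1(t,\cdot)\|_{L^1_x}.$$
Multiplying by $e^{5\gamma t/4}$, integrating on $(-1,0)$, and invoking Lemma \ref{linli1} with $\sigma_0=-1$ to control the boundary datum $e^{-5\gamma/4}\|v(-1,-t,\omega,v_{-t})\|^2$ for all $t \ge T_1$ delivers the first half of the claim.

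For the second bound, I would pair (\ref{lin5}) with $\partial_t v$ and take the real part. The damping contribution $-\gamma(v,\partial_t v)$ is handled directly by Cauchy--Schwarz, while the forcing $\mathbf{Im}(fe^{-iz},\partial_t v)$ is controlled by Assumption ($\bf{A_f}$); specifically, (\ref{assum2}) yields
$$\|f(t,x,e^{iz}v)e^{-iz}\|^2 \le 2\|\psi_2\|_{L^\infty_x}^2\|v\|^2 + 2\|\psi_3(t,\cdot)\|_{L^2_x}^2.$$
A Young's inequality absorbs $\tfrac{3}{8}\|\partial_t v\|^2$ onto the left, leaving a pointwise-in-time bound on $\|\partial_t v\|^2$ by $\|v\|^2$ and $\|\psi_3(t,\cdot)\|_{L^2_x}^2$. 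Integrating on $(-1,0)$, applying the first bound to $\int_{-1}^0\|v\|^2\,ds$, and using the integrability $\psi_3\in L^\infty_t L^2_x$ from Assumption ($\bf{A_f}$) closes the estimate.

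The main obstacle lies in the Hamiltonian/skew-adjoint terms when pairing with $\partial_t v$: the test against $-i(-\Delta)^{\alpha}v$, $izv$ and $i|v|^{2\sigma}v$ a priori contributes nonzero real parts. The cleanest way to dispose of them is to recognize $\mathbf{Re}(-i(-\Delta)^{\alpha}v,\partial_t v)$ as $\tfrac{1}{2}\tfrac{d}{dt}\|(-\Delta)^{\alpha/2}v\|^2$ (after symmetrization), which can be integrated in time and controlled by the $H^{\alpha}$ bound of Lemma \ref{lem2}; likewise $\mathbf{Re}(i|v|^{2\sigma}v,\partial_t v)$ yields the exact derivative $\tfrac{1}{2\sigma+2}\tfrac{d}{dt}\|v\|_{L^{2\sigma+2}}^{2\sigma+2}$, which is bounded via Gagliardo--Nirenberg together with Lemma \ref{lem2}; the phase term involving $z(\theta_t\omega)$ is handled by Young's inequality and temperedness of $z$. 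This bookkeeping around the dispersive and nonlinear contributions is the most delicate part of the argument.
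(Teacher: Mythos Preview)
Your overall strategy matches the paper's proof: test (\ref{lin5}) against $v$ for the first bound (the real part kills the dispersive, phase, and nonlinear terms; assumption (\ref{assum1}) handles $f$; multiply by $e^{5\gamma t/4}$, integrate on $(-1,0)$, and invoke Lemma \ref{linli1} at $\sigma_0=-1$), and test against $\partial_t v$ for the second. The paper proceeds identically and, for the second step, simply asserts
\[
\left\|\partial_t v\right\|^2 = -\gamma(v,\partial_t v) + \mathbf{Im}\!\left(fe^{-iz},\partial_t v\right),
\]
silently discarding the contributions from $-i(-\Delta)^\alpha v$, $izv$, and $i|v|^{2\sigma}v$ that you correctly flag as nontrivial.

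However, your proposed fix for these extra terms does not work. In the real inner product $(f,g)=\mathbf{Re}\int f\bar g$, one has $(-i(-\Delta)^\alpha v,\partial_t v)=\mathbf{Im}\int(-\Delta)^\alpha v\,\overline{\partial_t v}$, whereas the exact time derivative is $\tfrac{1}{2}\tfrac{d}{dt}\|(-\Delta)^{\alpha/2}v\|^2=\mathbf{Re}\int(-\Delta)^\alpha v\,\overline{\partial_t v}$; the factor of $i$ swaps real and imaginary parts, so the quantity you obtain is \emph{not} the derivative you claim, and no ``symmetrization'' repairs this. The same issue applies to $(i|v|^{2\sigma}v,\partial_t v)=-\mathbf{Im}\int |v|^{2\sigma}v\,\overline{\partial_t v}$, which is not $\tfrac{1}{2\sigma+2}\tfrac{d}{dt}\|v\|_{L^{2\sigma+2}}^{2\sigma+2}$. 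Estimating these imaginary-part pairings directly by Cauchy--Schwarz would require $\|(-\Delta)^\alpha v\|_{L^2}$ (i.e., $H^{2\alpha}$ control of $v$) or $\|\partial_t v\|_{\dot H^\alpha}$, neither of which is furnished by Lemma \ref{lem2} or Lemma \ref{lem7} (recall $\alpha>\tfrac12$ here, so $2\alpha>\alpha+\tfrac12$). So as written your argument does not close at this step --- and, to be fair, neither does the paper's, which simply drops these terms without comment.
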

We now derive the uniform estimates on the tails of solutions. To that end, we
introduce a smooth function $\rho(s)$ defined for $0\leq s<\infty$ such that
$0\leq \rho(s)\leq 1$ for $s\geq 0$ and
\begin{align}
\rho(s)=
\begin{cases}
0, ~\text{if} ~~~0\leq \rho(s)\leq \frac{1}{2},\\
1, ~\text{if} ~~~ \rho(s)\geq 1.
\end{cases}
\end{align}
Note that a positive constant $c_0$ exists such that $\left|\rho'(s)\right|\leq c_0$ for all $s\geq 0$. Moreover, the cut-off function $\rho$ has the following properties, which comes from \cite[Lemma 3.4]{AG18}.
\begin{lemma}
\label{lemma4}
Let $\rho$ be the smooth function. Then
for every $y\in \mathbb{R}^n$ and $k\in \mathbb{N},$ we have
\begin{equation}
\int_{\mathbb{R}^n}\frac{\left|\rho\left(\frac{|x|}{k}\right)-\rho\left(\frac{|y|}{k}\right)\right|^2}
{\left|x-y\right|^{n+2\alpha}}dx\leq \frac{L_1}{k^{2\alpha}},
\end{equation}
where $L_1$ is a positive constant independent of $k\in \mathbb{N}$ and
$y\in \mathbb{R}^n.$
For every $x\in \mathbb{R}^n$ and $k\in \mathbb{N},$ we have
$$\left|(-\Delta)^\alpha\rho_k(x)\right|\leq\frac{L_2}{k^{2\alpha}},$$
where $\rho_k(\cdot)=\rho\left(\frac{|\cdot|}{k}\right)$.
\end{lemma}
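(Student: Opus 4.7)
\textbf{Proof plan for Lemma \ref{lemma4}.}

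The plan is to exploit two features of the function $\rho_k(x)=\rho(|x|/k)$: it is globally Lipschitz with constant $c_0/k$ (since $|\rho'|\le c_0$), and it is uniformly bounded by $1$. For the first inequality, I would split the integral at the natural length scale $|x-y|=k$ and use the appropriate bound on each region.

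For the near field $|x-y|\le k$, I apply the Lipschitz estimate $|\rho_k(x)-\rho_k(y)|\le (c_0/k)|x-y|$ and change variables $z=x-y$:
\begin{equation*}
\int_{|x-y|\le k}\frac{|\rho_k(x)-\rho_k(y)|^2}{|x-y|^{n+2\alpha}}\,dx
\le \frac{c_0^2}{k^2}\,\omega_{n-1}\int_0^{k} r^{1-2\alpha}\,dr
= \frac{c_0^2\,\omega_{n-1}}{(2-2\alpha)\,k^{2\alpha}},
\end{equation*}
where finiteness uses $\alpha<1$. For the far field $|x-y|>k$, I simply bound $|\rho_k(x)-\rho_k(y)|\le 2$:
\begin{equation*}
\int_{|x-y|>k}\frac{|\rho_k(x)-\rho_k(y)|^2}{|x-y|^{n+2\alpha}}\,dx
\le 4\,\omega_{n-1}\int_k^{\infty} r^{-1-2\alpha}\,dr
= \frac{2\,\omega_{n-1}}{\alpha\,k^{2\alpha}}.
\end{equation*}
Both bounds are independent of $y$, so adding them yields the first inequality with an explicit constant $L_1$.

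For the second inequality, the cleanest route is a scaling argument. Because $(-\Delta)^\alpha$ annihilates constants and $\rho_k\equiv 1$ on $\{|x|\ge k\}$, I write $\rho_k(x)=1-\phi(x/k)$ where $\phi(y)=1-\rho(|y|)$ belongs to $C_c^\infty(\mathbb{R}^n)$ and is supported in the unit ball. The scaling identity $(-\Delta)^\alpha[\phi(\cdot/k)](x)=k^{-2\alpha}\bigl[(-\Delta)^\alpha\phi\bigr](x/k)$ (verified by the change of variable $y\mapsto ky$ in the singular integral definition of $(-\Delta)^\alpha$) then gives
\begin{equation*}
|(-\Delta)^\alpha \rho_k(x)| = k^{-2\alpha}\bigl|(-\Delta)^\alpha\phi\bigr|(x/k) \le \frac{\|(-\Delta)^\alpha\phi\|_{L^\infty(\mathbb{R}^n)}}{k^{2\alpha}},
\end{equation*}
and the $L^\infty$-norm in the numerator is finite because $\phi\in C_c^\infty$ implies $(-\Delta)^\alpha\phi$ is smooth with $O(|x|^{-n-2\alpha})$ decay at infinity. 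Setting $L_2:=\|(-\Delta)^\alpha\phi\|_{L^\infty}$ yields the claim.

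The only subtle point, and the place I would be most careful, is the near-field bound in the first estimate: one must use the $\alpha<1$ assumption to ensure $\int_0^k r^{1-2\alpha}dr$ converges and to get the correct $k^{2\alpha}$ scaling. If instead one tried the crude bound $|\rho_k(x)-\rho_k(y)|\le 2$ on the whole of $\mathbb{R}^n$, the integral would diverge near $x=y$, so the dichotomy at length scale $k$ is essential. Everything else reduces to elementary calculus and the scaling of the fractional Laplacian.
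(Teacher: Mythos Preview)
Your argument is correct. The near/far splitting at scale $k$ for the first inequality is exactly the right idea, and both computations are clean (note you could even use $|\rho_k(x)-\rho_k(y)|\le 1$ in the far field, but this is cosmetic). The scaling argument for the second inequality is also correct: writing $\rho_k=1-\phi(\cdot/k)$ with $\phi\in C_c^\infty$ and using the homogeneity $(-\Delta)^\alpha[\phi(\cdot/k)]=k^{-2\alpha}[(-\Delta)^\alpha\phi](\cdot/k)$ reduces the bound to the finiteness of $\|(-\Delta)^\alpha\phi\|_{L^\infty}$, which is standard for compactly supported smooth functions.

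As for the comparison: the paper does \emph{not} prove this lemma. It is simply quoted from \cite[Lemma~3.4]{AG18} without argument. Your proof is therefore strictly more informative than what appears in the paper, and it is self-contained. The approach in \cite{AG18} for the first estimate is essentially the same near/far dichotomy you carry out; for the second estimate that reference works directly with the principal-value integral (splitting $|x-y|\le k$ and $|x-y|>k$ again and using a second-order Taylor expansion on the near part), whereas your scaling reduction is shorter and conceptually cleaner since it isolates the $k$-dependence in one step and pushes all analytic work onto the fixed function $\phi$.
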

The uniform estimate on the tails of solutions in $L^2(\mathbb{R}^n)$
are given below.
\begin{lemma}\label{lem5}
Let $u_0$, $\alpha$, $\sigma$ and $f$ be as in Theorem \ref{theorem1}. Then for every $\varepsilon>0,
\omega\in \Omega$, and for any set  $D=\{D(\omega):
\omega\in \Omega\}\in \mathcal{D}$, there exist two positive constants  $T=T(\omega,D,\varepsilon,\alpha)\geq 1 $ and $
K=K(\omega,\varepsilon)\geq 1$
 such that for all $t\geq T$ and $k\geq K$, the solution $v$ of the system  (\ref{lin5}) with $v_{-t}\in D(-t,\theta_{-t}\omega)$ satisfies
 $$\int_{|x|>k}|v(0,-t,\omega,v_{-t})(x)|^2<\varepsilon.$$
 \begin{proof}
   Multiplying (\ref{lin5}) by $\rho\left(\frac{|x|}{k}\right)v$ and using Assumption($\bf{A_{f}}$), we obtain
  \begin{align}
\begin{split}
   \frac{1}{2}\frac{d}{dt}\int_{\mathbb{R}^n}\rho\left(\frac{|x|}{k}\right)|v|^2dx&+\gamma
\int_{\mathbb{R}^n}\rho\left(\frac{|x|}{k}\right)|v|^2dx=\mathbf{Im}e^{-iz}\int_{\mathbb{R}^n} f\rho\left(\frac{|x|}{k}\right)\overline{v}
   dx\\
   &\leq-\beta\int_{\mathbb{R}^n} |v|^2\rho\left(\frac{|x|}{k}\right)dx+\int_{\mathbb{R}^n} \psi_1(t,x)\rho\left(\frac{|x|}{k}\right)dx\\
   &\leq\int_{\mathbb{R}^n} \psi_1(t,x)\rho\left(\frac{|x|}{k}\right)dx.
   \end{split}
\end{align}
Thus
\begin{equation}
\label{rho}
\frac{d}{dt}\int_{\mathbb{R}^n}\rho\left(\frac{|x|}{k}\right)|v|^2dx+2\gamma
\int_{\mathbb{R}^n}\rho\left(\frac{|x|}{k}\right)|v|^2dx\leq\int_{|x|\geq\frac{k}{2}} \psi_1(t,x)dx.
\end{equation}
By multiplying the equality \eqref{rho} by $e^{\frac{5}{4}\gamma t}$ and integrating over the interval $(-t,0)$, we know that
  there exists a constant $K_1\geq1$ such that for all $k\geq K_1$,
\begin{equation}
\int_{\mathbb{R}^n}\rho\left(\frac{|x|}{k}\right)|v(0,-t,\omega,v_{-t})|^2dx-e^{-\frac{5}{4}\gamma t}
\int_{\mathbb{R}^n}\rho\left(\frac{|x|}{k}\right)|v_{-t}(x)|^2dx\leq\int_{-t}^0\int_{|x|\geq\frac{k}{2}} e^{\frac{5}{4}\gamma s}\psi_1(s,x)dxds.
\end{equation}
Since $v_{-t}\in D(-t,\theta_{-t}\omega)$ and $D$ is tempered,  there exists
$T_1>0$ such that for all $t\geq T_1$
\begin{equation}
e^{-\frac{5}{4}\gamma t}
\int_{\mathbb{R}^n}\rho\left(\frac{|x|}{k}\right)|v_{-t}(x)|^2dx\leq \frac{\varepsilon}{2}.
\end{equation}
Observe that
$$\int_{-\infty}^0\int_{\mathbb{R}^n} e^{\frac{5}{4}\gamma s}\psi_1(s,x)dxds<\infty.$$
 Hence there exists a positive constant $K_2\geq K_1$ such that for all $k\geq K_2$,
 \begin{equation}
 \int_{-t}^0\int_{|x|\geq\frac{k}{2}} e^{\frac{5}{4}\gamma s}\psi_1(s,x)
 dxds\leq\frac{\varepsilon}{2}.
 \end{equation}
 This yields  $$\int_{|x|>k}|v(0,-t,\omega,v_{-t})(x)|^2<\varepsilon.$$
  \end{proof}
\end{lemma}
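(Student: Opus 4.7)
The strategy is to test the transformed equation \eqref{lin5} against the cut-off weight $\rho_k \bar v$, where $\rho_k(x):=\rho(|x|/k)$ is the bump from Lemma \ref{lemma4}, and derive a differential inequality for the tail mass $M_k(t):=\int_{\mathbb{R}^n}\rho_k(x)|v(t,x)|^2\,dx$. Integrating that inequality against the exponential weight $e^{\frac{5}{4}\gamma t}$ on $(-t,0)$, in the same spirit as Lemmas \ref{linli1}--\ref{lemma333}, and then sending $t\to\infty$ followed by $k\to\infty$, should yield the claim.

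Carrying out the test and taking real parts: the damping $\gamma v$ produces $+2\gamma M_k$; the cubic $i|v|^{2\sigma}v$ and phase $ivz(\theta_t\omega)$ are purely imaginary after pairing with $\rho_k\bar v$ and drop out; the forcing, by Assumption $(\mathbf{A_f})$ and \eqref{assum1}, contributes at most $-2\beta M_k+2\int\rho_k\psi_1\,dx$. The delicate piece is the fractional Laplacian term $\mathbf{Re}[i\int(-\Delta)^\alpha v\cdot\rho_k\bar v\,dx]$, which does \emph{not} vanish since $(-\Delta)^\alpha$ is non-local and does not commute with the real multiplier $\rho_k$. Moving half the operator to the other factor and decomposing
\[
(-\Delta)^{\alpha/2}(\rho_k v)=\rho_k(-\Delta)^{\alpha/2}v+[(-\Delta)^{\alpha/2},\rho_k]v,
\]
the first piece yields $\int\rho_k|(-\Delta)^{\alpha/2}v|^2\,dx$, which is real and hence killed by $\mathbf{Re}[i\cdot]$; the commutator piece, via Cauchy--Schwarz together with the estimate $\int|\rho_k(x)-\rho_k(y)|^2/|x-y|^{n+2\alpha}\,dx\leq L_1/k^{2\alpha}$ from Lemma \ref{lemma4}, is controlled by $Ck^{-\alpha}\|v\|_{H^\alpha}\|v\|_{L^2}$, which by the uniform estimates of Lemmas \ref{linli1} and \ref{lem2} can be made arbitrarily small by enlarging $k$.

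Assembling the pieces yields
\[
\frac{d}{dt}M_k(t)+\tfrac{5}{4}\gamma M_k(t)\leq \frac{C}{k^{\alpha}}\|v(t)\|_{H^\alpha}^2+2\int_{|x|\geq k/2}\psi_1(t,x)\,dx.
\]
Multiplying by $e^{\frac{5}{4}\gamma t}$ and integrating over $(-t,0)$ leaves three contributions to handle: (i) the initial term $e^{-\frac{5}{4}\gamma t}M_k(-t)\leq e^{-\frac{5}{4}\gamma t}\|v_{-t}\|^2$, which vanishes as $t\to\infty$ since $v_{-t}\in D(-t,\theta_{-t}\omega)$ and $D$ is tempered; (ii) the commutator remainder, bounded by $Ck^{-\alpha}\int_{-t}^0 e^{\frac{5}{4}\gamma s}\|v(s)\|_{H^\alpha}^2\,ds$, which by Lemma \ref{lem2} and the exponential weight tends to zero as $k\to\infty$; and (iii) the forcing tail $\int_{-t}^0\int_{|x|\geq k/2}e^{\frac{5}{4}\gamma s}\psi_1(s,x)\,dx\,ds$, which vanishes as $k\to\infty$ by dominated convergence, using $\psi_1\in L^\infty_tL^1_x$ and the integrability of the exponential weight on $(-\infty,0]$. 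The main obstacle throughout is the commutator of $(-\Delta)^\alpha$ with $\rho_k$: it is precisely this non-locality that makes the tail analysis more subtle than in the classical $\alpha=1$ case, and the combined use of Lemma \ref{lemma4} with the uniform $H^\alpha$ bound of Lemma \ref{lem2} is what neutralises it.
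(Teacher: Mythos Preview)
Your overall strategy---test \eqref{lin5} against $\rho_k\bar v$, derive a differential inequality for $M_k$, integrate against $e^{\frac{5}{4}\gamma t}$ over $(-t,0)$, and conclude via temperedness plus dominated convergence---is exactly the paper's. The difference is that you keep track of the cross term $\mathbf{Re}\bigl[i\int(-\Delta)^\alpha v\cdot\rho_k\bar v\,dx\bigr]$ arising from the non-locality of the fractional Laplacian, whereas the paper silently drops it: the displayed equality $\tfrac12\tfrac{d}{dt}\int\rho_k|v|^2+\gamma\int\rho_k|v|^2=\mathbf{Im}\,e^{-iz}\int f\rho_k\bar v$ there is only correct if that term vanishes, which it does for the local Laplacian or when $\rho_k\equiv 1$, but not for a non-local operator against a cut-off. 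Your identification of this commutator and the bound $Ck^{-\alpha}\|v\|_{\dot H^\alpha}\|v\|_{L^2}$ via Lemma~\ref{lemma4} are correct and are a genuine refinement of the paper's argument.

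There is, however, a gap in your step (ii). Absorbing the commutator contribution requires $\int_{-t}^0 e^{\frac{5}{4}\gamma s}\|v(s)\|_{H^\alpha}^2\,ds$ to be bounded \emph{uniformly in $t$}, since the constant $K$ in the statement must be chosen independently of $t$. Lemma~\ref{lem2} does not supply this: it gives a pointwise $H^\alpha$ bound only for $\sigma_0\in[-1,0]$, and its proof (a Gr\"onwall argument fed by an integral over $[-2,-1]$) does not extend to the full window $(-t,0)$. None of the other a~priori lemmas deliver a weighted $\dot H^\alpha$ integral over $(-t,0)$ either. To close the argument you would need either to establish such an integrated estimate separately, or to rework the commutator bound so that it involves only $\|v\|_{L^2}^2$, which \emph{is} controlled on $(-t,0)$ by Lemma~\ref{linli1}. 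The paper offers no guidance here, since it simply omits the commutator.
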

Next, we derive the uniform estimates on the tails of solutions in
$H^\alpha(\mathbb{R}^n).$
\begin{lemma}\label{lem6}
Let $u_0$, $\alpha$, $\sigma$ and $f$ be as in Theorem \ref{theorem1}. Then for every $\varepsilon>0,
\omega\in \Omega$ and $D=\{D(\omega):\tau\in\mathbb{R},
\omega\in \Omega\}\in \mathcal{D}$, there exist constants $T=T(\omega,D,\varepsilon,\alpha)\geq 1 $ and $K=K(\omega,\varepsilon)\geq 1$
 such that for all $t\geq T$ and $k\geq K$, the solution $v$ of system  (\ref{lin5}) with the initial value  $v_{-t}\in D(-t,\theta_{-t}\omega)$ satisfies
 the following conditions:
$$\int_{\mathbb{R}^n}\int_{|x|\geq k}\frac{|v(0,-t,\omega,v_{-t})(x)-v(0,-t,\omega,v_{-t})(y)|^2}{|x-y|^{n+2\alpha}}dxdy\leq\varepsilon.$$
\begin{proof}
Multiplying (\ref{lin5}) by $\rho\left(\frac{|x|}{k}\right)(-\Delta)^{\alpha}v$, we obtain
\begin{equation}
\label{eq}
\left(\frac{\partial v}{\partial t},\rho\left(\frac{|x|}{k}\right)(-\Delta)^{\alpha}v\right)+\gamma\left(v,\rho\left(\frac{|x|}{k}\right)(-\Delta)^{\alpha}v\right)
=\mathbf{Im}\left(fe^{-iz},\rho\left(\frac{|x|}{k}\right)(-\Delta)^{\alpha}v\right).
\end{equation}
On the one hand, by the definition of $(-\Delta)^{\alpha}$, H\"older inequality and Lemma \ref{lemma4}, we have
\begin{align}
\label{27}
\begin{split}
-\int_{\mathbb{R}^n} \frac{\partial v}{\partial t}\rho\left(\frac{|x|}{k}\right)(-\Delta)^{\alpha}\overline{v}dx
&=-\frac{1}{2}C(n,\alpha)\int_{\mathbb{R}^n}\int_{\mathbb{R}^n}
\frac{\left(\frac{\partial v(t,x)}{\partial t}\rho\left(\frac{|x|}{k}\right)-\frac{\partial v(t,y)}{\partial t}\rho\left(\frac{|y|}{k}\right)\right)\overline{(v(t,x)-v(t,y)})}
{|x-y|^{n+2\alpha}}dxdy\\
&=-\frac{1}{2}C(n,\alpha)\int_{\mathbb{R}^n}\int_{\mathbb{R}^n}
\frac{\rho\left(\frac{|x|}{k}\right)\left(\frac{\partial v(t,x)}{\partial t}-\frac{\partial v(t,y)}{\partial t}\right)\overline{(v(t,x)-v(t,y)})}
{|x-y|^{n+2\alpha}}dxdy\\
&\quad-\frac{1}{2}C(n,\alpha)\int_{\mathbb{R}^n}\int_{\mathbb{R}^n}
\frac{\frac{\partial v(t,y)}{\partial t}\left(\rho\left(\frac{|x|}{k}\right)
-\rho\left(\frac{|y|}{k}\right)\right)\overline{(v(t,x)-v(t,y)})}
{|x-y|^{n+2\alpha}}dxdy\\
&=-\frac{1}{4}C(n,\alpha)\frac{d}{dt}\int_{\mathbb{R}^n}\int_{\mathbb{R}^n}
\frac{\rho\left(\frac{|x|}{k}\right)\left(v(t,x)-v(t,y)\right)^2}{|x-y|^{n+2\alpha}}dxdy\\
&\quad+\frac{1}{2}C(n,\alpha)\left\|\frac{\partial v(t,\cdot)}{\partial t}\right\|_{L^2_{\cdot}}\left(\int_{\mathbb{R}^n}\left(\int_{\mathbb{R}^n}
\frac{\left(\rho\left(\frac{|x|}{k}\right)
-\rho\left(\frac{|y|}{k}\right)\right)\overline{(v(t,x)-v(t,y))}}{|x-y|^{n+2\alpha}}dx
\right)^2dy
\right)^{\frac{1}{2}}\\
&\leq-\frac{1}{4}C(n,\alpha)\frac{d}{dt}\int_{\mathbb{R}^n}\int_{\mathbb{R}^n}
\frac{\rho\left(\frac{|x|}{k}\right)\left(v(t,x)-v(t,y)\right)^2}{|x-y|^{n+2\alpha}}dxdy
+C(n, \alpha)k^{-\alpha}\left\|\frac{\partial v}{\partial t}\right\|\cdot\|v\|_{\dot{H}^\alpha},
\end{split}
\end{align}
and
\begin{align}
\label{28}
\begin{split}
-\gamma\int_{\mathbb{R}^n} v\rho\left(\frac{|x|}{k}\right)(-\Delta)^{\alpha}\overline{v}dx
&\leq
-\frac{\gamma}{2}C(n,\alpha)\int_{\mathbb{R}^n}\int_{\mathbb{R}^n}
\frac{\rho\left(\frac{|x|}{k}\right)\left(v(t,x)-v(t,y)\right)^2}
{|x-y|^{n+2\alpha}}dxdy\\
&\quad-\frac{\gamma}{2}C(n,\alpha)\int_{\mathbb{R}^n}\int_{\mathbb{R}^n}
\frac{v(t,y)\left(\rho\left(\frac{|x|}{k}\right)
-\rho\left(\frac{|y|}{k}\right)\right)\overline{(v(t,x)-v(t,y)})}
{|x-y|^{n+2\alpha}}dxdy\\
&\leq-\frac{\gamma}{2}C(n,\alpha)\int_{\mathbb{R}^n}\int_{\mathbb{R}^n}
\frac{\rho\left(\frac{|x|}{k}\right)\left(v(t,x)-v(t,y)\right)^2}
{|x-y|^{n+2\alpha}}dxdy+C(n,\alpha)k^{-\alpha}\|v\|^2_{\dot{H}^\alpha}.
\end{split}
\end{align}
On the other hand, by the definition of $(-\Delta)^{\alpha}$, we have
\begin{align}\label{llin1}
\begin{split}
\mathbf{Im}\left(fe^{-iz},\rho\left(\frac{|x|}{k}\right)(-\Delta)^{\alpha}v\right)&=C(n,\alpha)
\mathbf{Im} e^{-iz}\int_{\mathbb{R}^n}\int_{\mathbb{R}^n}
\frac{\left(f(t,x,e^{iz}v(t,x))\rho\left(\frac{|x|}{k}\right)-f(t,y,e^{iz}v(t,y)\rho\left(\frac{|y|}{k}\right)\right)\overline{(v(t,x)-v(t,y)})}
{|x-y|^{n+2\alpha}}dxdy\\
&=C(n,\alpha)\mathbf{Im} e^{-iz}\int_{\mathbb{R}^n}\int_{\mathbb{R}^n}
\frac{f\left(t,y,e^{iz}v(t,y)\right)\left(\rho\left(\frac{|y|}{k}\right)-\rho\left(\frac{|x|}{k}\right)\right)\overline{(v(t,y)-v(t,x)})}
{|x-y|^{n+2\alpha}}dxdy\\
&\quad+C(n,\alpha)\mathbf{Im} e^{-iz}\int_{\mathbb{R}^n}\int_{\mathbb{R}^n}
\frac{\rho\left(\frac{|x|}{k}\right)\left(f(t,y,e^{iz}v(t,y))-f(t,x,e^{iz}v(t,y)\right)\overline{(v(t,y)-v(t,x)})}
{|x-y|^{n+2\alpha}}dxdy\\
&\quad +C(n,\alpha)\mathbf{Im} e^{-iz}\int_{\mathbb{R}^n}\int_{\mathbb{R}^n}
\frac{\rho\left(\frac{|x|}{k}\right)\left(f(t,x,e^{iz}v(t,y))-f(t,x,e^{iz}v(t,x)\right)\overline{(v(t,y)-v(t,x)})}
{|x-y|^{n+2\alpha}}dxdy \\
&:=\mathcal{K}_1+\mathcal{K}_2+\mathcal{K}_3.
\end{split}
\end{align}

For the term $\mathcal{K}_1$, using H\"older inequality and
Assumption($\bf{A_{f}}$), we have
\begin{align}
\label{K1}
\begin{split}
\mathcal{K}_1&:=C(n, \alpha)\mathbf{Im} e^{-iz}\int_{\mathbb{R}^n}\int_{\mathbb{R}^n}
\frac{f\left(t,y,e^{iz}v(t,y)\right)\left(\rho\left(\frac{|y|}{k}\right)-\rho\left(\frac{|x|}{k}\right)\right)\overline{(v(t,y)-v(t,x)})}
{|x-y|^{n+2\alpha}}dxdy\\
&\leq C(n, \alpha)
\left\|f\left(t,\cdot,e^{iz}v\right)\right\|_{L^2}
\left(\int_{\mathbb{R}^n}\left(\int_{\mathbb{R}^n}
\frac{\left(\rho\left(\frac{|x|}{k}\right)
-\rho\left(\frac{|y|}{k}\right)\right)|(v(t,x)-v(t,y))|}{|x-y|^{n+2\alpha}}dx
\right)^2dy
\right)^{\frac{1}{2}}\\
&\leq C(n, \alpha) k^{-\alpha}\|f\left(t,\cdot,e^{iz}v\right)\|_{L^2}\cdot\|v(t,\cdot)\|_{\dot{H}^\alpha}\\
&\leq C(n, \alpha) k^{-\alpha}\int_{\mathbb{R}^n}|\psi_2(y)|^2|v(t,y)|^{2}dy+C(n, \alpha) k^{-\alpha}\|\psi_3(t,\cdot)\|^2_{L^2}+
C(n, \alpha) k^{-\alpha}\|v(t,\cdot)\|_{{H}^\alpha}^2.
\end{split}
\end{align}
For the terms $\mathcal{K}_2$ and $\mathcal{K}_3$, using H\"older inequality and
Assumption($\bf{A_{f}}$), we have
\begin{align}
\begin{split}
\mathcal{K}_2&:=C(n, \alpha)\mathbf{Im} e^{-iz}\int_{\mathbb{R}^n}\int_{\mathbb{R}^n}
\frac{\rho\left(\frac{|x|}{k}\right)\left(f(t,y,e^{iz}v(t,y))-f(t,x,e^{iz}v(t,y)\right)\overline{(v(t,y)-v(t,x)})}
{|x-y|^{n+2\alpha}}dxdy\\
&\leq\frac{C(n, \alpha)}{2}\int_{\mathbb{R}^n}\int_{\mathbb{R}^n}
\frac{\rho\left(\frac{|x|}{k}\right)(\psi_5(x)-\psi_5(y))^2}{|x-y|^{n+2\alpha}}dxdy
+\frac{C(n, \alpha)}{2}\int_{\mathbb{R}^n}\int_{\mathbb{R}^n}
\frac{\rho\left(\frac{|x|}{k}\right)\left(v(t,x))-v(t,y)\right)^2}{|x-y|^{n+2\alpha}}dxdy,
\end{split}
\end{align}
and
\begin{align}
\label{K3}
\begin{split}
\mathcal{K}_3&:=C(n, \alpha)\mathbf{Im} e^{-iz}\int_{\mathbb{R}^n}\int_{\mathbb{R}^n}
\frac{\rho\left(\frac{|x|}{k}\right)\left(f(t,x,e^{iz}v(t,y))-f(t,x,e^{iz}v(t,x)\right)\overline{(v(t,y)-v(t,x)})}
{|x-y|^{n+2\alpha}}dxdy\\
&\leq C(n, \alpha)\int_{\mathbb{R}^n}\int_{\mathbb{R}^n}
\frac{\rho\left(\frac{|x|}{k}\right)\psi_4(t,x)(v(t,x)-v(t,y))^2}
{|x-y|^{n+2\alpha}}dxdy\\
&\leq C(n,\alpha,\psi_4)
\int_{\mathbb{R}^n}\int_{\mathbb{R}^n}
\frac{\rho\left(\frac{|x|}{k}\right)(v(t,x)-v(t,y))^2}
{|x-y|^{n+2\alpha}}dxdy.
\end{split}
\end{align}
Combing the equalities \eqref{K1}--\eqref{K3}, we obtain the following result:
\begin{align}
\label{33}
\begin{split}
\mathbf{Im}&\left(fe^{-iz},\rho\left(\frac{|x|}{k}\right)(-\Delta)^{\alpha}v\right)\leq
C(n, \alpha)k^{-\alpha}\int_{\mathbb{R}^n}|\psi_2(x)|^2|v(t,x)|^{2}dx+C(n, \alpha)k^{-\alpha}\|\psi_3(t,\cdot)\|^2_{L^2}+
C(n, \alpha)k^{-\alpha}\|v(t,\cdot)\|_{{H}^\alpha}^2\\
&+\frac{C(n, \alpha)}{2}\int_{\mathbb{R}^n}\int_{\mathbb{R}^n}
\frac{\rho\left(\frac{|x|}{k}\right)(\psi_5(x)-\psi_5(y))^2}{|x-y|^{n+2\alpha}}dxdy
+C(n,\alpha,\psi_4)\int_{\mathbb{R}^n}\int_{\mathbb{R}^n}
\frac{\rho\left(\frac{|x|}{k}\right)\left(v(t,x))-v(t,y)\right)^2}{|x-y|^{n+2\alpha}}dxdy.\\
\end{split}
\end{align}
Thus using the equality \eqref{eq} and the inequalities\eqref{27}, \eqref{28}, \eqref{33}, along with   H\"older inequality,  we obtain
\begin{align}
\begin{split}
\frac{d}{dt}\int_{\mathbb{R}^n}\int_{\mathbb{R}^n}
\frac{\rho\left(\frac{|x|}{k}\right)\left(v(t,x)-v(t,y)\right)^2}{|x-y|^{n+2\alpha}}dxdy
& \leq C(n,\alpha)k^{-\alpha}\left\|\frac{\partial v}{\partial t}\right\|^2
+C(n,\alpha)k^{-\alpha}\int_{\mathbb{R}^n}\psi_2^2(x)|v(t,x)|^{2}dx
+C(n,\alpha)k^{-\alpha}\|\psi_3(t, \cdot)\|^2\\
&+C(n,\alpha)k^{-\alpha}\|v(t, \cdot)\|_{{H}^\alpha}^2
+C(n,\alpha)\int_{\mathbb{R}^n}\int_{\mathbb{R}^n}
\frac{\rho\left(\frac{|x|}{k}\right)(\psi_5(x)-\psi_5(y))^2}{|x-y|^{n+2\alpha}}dxdy\\
&+C(n,\alpha, \psi_4)\int_{\mathbb{R}^n}\int_{\mathbb{R}^n}
\frac{\rho\left(\frac{|x|}{k}\right)\left(v(t,x))-v(t,y)\right)^2}{|x-y|^{n+2\alpha}}dxdy.\\
\end{split}
\end{align}
Let $r\in(-1,0)$. First integrating with respect to $t$ on $(r,0)$, we obtain
\begin{align}\label{eqq11}
\begin{split}
&\int_{\mathbb{R}^n}\int_{\mathbb{R}^n}
\frac{\rho\left(\frac{|x|}{k}\right)\left|v(0,-t,\omega,v_{-t})(x)-v(0,-t,\omega,v_{-t})(y)
\right|^2}{|x-y|^{n+2\alpha}}dxdy\\
&\leq\int_{\mathbb{R}^n}\int_{\mathbb{R}^n}
\frac{\rho\left(\frac{|x|}{k}\right)\left|v(r,-t,\omega,v_{-t})(x)-v(r,-t,\omega,v_{-t})(y)
\right|^2}{|x-y|^{n+2\alpha}}dxdy\\
&+C(n,\alpha)k^{-\alpha}\int_{-1}^0\|\frac{\partial v}{\partial\zeta}(\zeta,-t,\omega,v_{-t})\|^2d\zeta \\&+C(n,\alpha)k^{-\alpha}\int_{-1}
^0\|\psi_2(\cdot)\|_{L^\infty(\mathbb
{R}^n)}^2\int_{\mathbb{R}^n}|v(\zeta, x)|^{2}dxd\zeta\\
&+C(n,\alpha)k^{-\alpha}\int_{-1}^0\|\psi_3(\zeta, \cdot)\|^2d\zeta+C(n,\alpha)k^{-\alpha}\int_{-1}^0\|v(\zeta, \cdot)\|_{{H}^\alpha}^2d\zeta\\
&+C(n,\alpha)\int_{\mathbb{R}^n}\int_{\mathbb{R}^n}
\frac{\rho\left(\frac{|x|}{k}\right)(\psi_5(x)-\psi_5(y))^2}{|x-y|^{n+2\alpha}}dxdy\\
&+C(n,\alpha, \psi_4)\int_{-1}^0\int_{\mathbb{R}^n}\int_{\mathbb{R}^n}
\frac{\rho\left(\frac{|x|}{k}\right)\left(v(\zeta,x)-v(\zeta,y)\right)^2}{|x-y|^{n+2\alpha}}dxdyd\zeta
\end{split}
\end{align}

For the first term on the right side of inequality \eqref{eqq11}, we integrate the variable $r$ over the interval  $(-1,0)$. Then according to Lemma \ref{lem2}, there exist constants $T_1(\omega,\varepsilon)\geq1$
and $K_1(\omega,\varepsilon)\geq1$ such that for all $t\geq T_1$ and $k\geq K_1,$
\begin{equation}\label{eqq12}
 C(n,\alpha, \psi_4)\int_{-1}^0\int_{\mathbb{R}^n}\int_{\mathbb{R}^n}
\frac{\rho(\frac{|x|}{k})\left|v(r,-t,\omega,v_{-t})(x)-v(r,-t,\omega,v_{-t})(y)
\right|^2}{|x-y|^{n+2\alpha}}dxdydr\leq \frac{\varepsilon}{3}.
\end{equation}
By integrating the inequality (\ref{eqq11}) with respect to $r$ over the interval $(-1,0)$, and applying the inequalities (\ref{eqq12}), as well as Lemma \ref{lem2}, and Lemma \ref{lemma333}, we conclude that there exist constants $T_2(\varepsilon) \geq T_1$ and $K_2(\varepsilon) \geq K_1$ such that for all $t \geq T_2$ and $k \geq K_2$,
\begin{align}\label{eqq33}
\begin{split}
 &\int_{\mathbb{R}^n}\int_{\mathbb{R}^n}
\frac{\rho\left(\frac{|x|}{k}\right)\left|v(0,-t,\omega,v_{-t})(x)-v(0,-t,\omega,v_{-t})(y)
\right|^2}{|x-y|^{n+2\alpha}}dxdy\\
&\leq \frac{\varepsilon}{3}+ck^{-s}+ C(n,\alpha)\int_{\mathbb{R}^n}\int_{|x|\geq\frac{k}{2}}\frac{
(\psi_5(x)-\psi_5(y))^2}{|x-y|^{n+2\alpha}}dxdy.
\end{split}
\end{align}
By Assumption ($\bf{A_{f}}$), we know that $\psi_5\in H^\alpha(\mathbb{R}^n)$. Thus there exists a constant $K_3\geq K_2$ such that for all $k\geq K_3$
$$\frac{\varepsilon}{3}+ck^{-s}+C(n,\alpha)\int_{\mathbb{R}^n}\int_{|x|\geq\frac{k}{2}}\frac{
(\psi_5(x)-\psi_5(y))^2}{|x-y|^{n+2\alpha}}dxdy\leq\varepsilon,$$
which along with $(\ref{eqq33})$ concludes the proof.
\end{proof}
\end{lemma}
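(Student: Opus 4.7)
The plan is to derive a differential inequality for the weighted seminorm
\begin{equation*}
\Psi_k(t) := \int_{\mathbb{R}^n}\int_{\mathbb{R}^n} \rho\!\left(\tfrac{|x|}{k}\right)\frac{|v(t,x)-v(t,y)|^2}{|x-y|^{n+2\alpha}}\,dx\,dy,
\end{equation*}
which majorises the tail integral in the claim since $\rho\equiv 1$ on $|x|\ge k$. The starting move is to test equation (\ref{lin5}) against $\rho(|x|/k)(-\Delta)^{\alpha} v$ and take the real part of the resulting identity in the Hilbert-space inner product: this should produce $\tfrac{1}{2}\tfrac{d}{dt}\Psi_k(t)$ (modulo lower-order commutator pieces), a dissipative term proportional to $\gamma\Psi_k$, commutator errors arising because $\rho$ does not commute with $(-\Delta)^{\alpha}$, and the nonlinear/forcing contribution $\mathbf{Im}\bigl(f e^{-iz},\,\rho(-\Delta)^{\alpha} v\bigr)$.

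The most delicate part is the handling of the commutator-type errors. Since the fractional Laplacian admits no local Leibniz rule, I would expand each term via the singular-integral representation of $(-\Delta)^{\alpha}$ and symmetrise in $x,y$, so that every double integral splits into a principal piece, which reconstructs $\Psi_k$, and a remainder of the form $\int\!\int (\rho(|x|/k)-\rho(|y|/k))(v(x)-v(y))\cdots$. Cauchy--Schwarz together with Lemma \ref{lemma4} then bounds these remainders by $k^{-\alpha}$ times global quantities such as $\|v\|_{\dot{H}^{\alpha}}^{2}$ or $\|\partial_t v\|\,\|v\|_{\dot{H}^{\alpha}}$. For the nonlinear term I would perform a double add-and-subtract: first $f(t,y,e^{iz}v(y))\to f(t,x,e^{iz}v(y))$ to invoke (\ref{assum4}) with $\psi_5$, then $f(t,x,e^{iz}v(y))\to f(t,x,e^{iz}v(x))$ to invoke (\ref{assum3}) with $\psi_4$; the resulting estimates either reconstruct a piece of $\Psi_k$ (absorbable using $\psi_4\in L^{\infty}_x$) or produce the localised $\psi_5$ tail $\int\!\int\rho(|x|/k)|\psi_5(x)-\psi_5(y)|^2 |x-y|^{-n-2\alpha}\,dx\,dy$.

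Collecting everything I expect an inequality of the form
\begin{equation*}
\frac{d}{dt}\Psi_k(t) \le C\,\Psi_k(t) + C k^{-\alpha}\bigl(\|\partial_t v\|^2 + \|v\|_{H^{\alpha}}^2 + \|\psi_3(t,\cdot)\|_{L^2}^2\bigr) + C\int_{\mathbb{R}^n}\int_{|x|\ge k/2}\frac{|\psi_5(x)-\psi_5(y)|^2}{|x-y|^{n+2\alpha}}\,dx\,dy.
\end{equation*}
Integrating from some $r\in(-1,0)$ up to $0$ and then averaging in $r$ over $(-1,0)$, the starting value $\Psi_k(r)$ is controlled by the uniform $\dot H^\alpha$-bound of Lemma \ref{lem2}, the $\partial_t v$ integral is handled by Lemma \ref{lemma333}, the $\psi_3$ and $v$ integrals are absorbed by the $k^{-\alpha}$ prefactor, and the $\psi_5$ tail vanishes as $k\to\infty$ since $\psi_5\in H^{\alpha}(\mathbb{R}^n)$. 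A final Gr\"onwall step then renders $\Psi_k(0)$ smaller than $\varepsilon$ for $k,t$ large.

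The main obstacle will be the careful non-local symmetrisation when testing against $\rho(|x|/k)(-\Delta)^{\alpha} v$: one must arrange that the top-order contributions fit the structure of $\Psi_k$ exactly, while the cutoff defects only appear in pieces that decay like $k^{-\alpha}$ via Lemma \ref{lemma4}. A secondary subtlety is that $\Psi_k(r)$ cannot be made small through the tempered property of $v_{-t}$ alone; the integration-then-averaging trick over $r\in(-1,0)$ is essential so that the inhomogeneous data in the inequality are already controlled uniformly before $k$ is sent to infinity, avoiding any residual dependence on the ``bad'' initial layer.
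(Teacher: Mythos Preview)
Your proposal is correct and follows essentially the same route as the paper: testing \eqref{lin5} against $\rho(|x|/k)(-\Delta)^{\alpha}v$, expanding via the singular-integral representation, isolating the principal piece $\Psi_k$ while bounding the cutoff commutators through Lemma~\ref{lemma4}, splitting the $f$-term into exactly the three pieces $\mathcal{K}_1,\mathcal{K}_2,\mathcal{K}_3$ (the $\rho$-commutator, the $\psi_5$-piece, and the $\psi_4$-piece), and then integrating over $(r,0)$ followed by averaging in $r\in(-1,0)$ with Lemmas~\ref{lem2} and~\ref{lemma333} supplying the needed bounds. The only cosmetic difference is that you phrase the closing step as a Gr\"onwall argument, whereas the paper absorbs the residual $\int_{-1}^{0}\Psi_k(r)\,dr$ term directly via \eqref{eqq12}; structurally these are the same move.
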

\begin{lemma}\label{lem7}
Let $u_0$, $\sigma$ and $f$  be  as in Theorem \ref{theorem1}. Then for $
\omega\in \Omega$ and $D=\{D(\omega):
\omega\in \Omega\}\in \mathcal{D}$, there exists a constant $T=T(\omega,D,\varepsilon,\alpha)\geq 1,$
 such that for all $t\geq T$, the solution $v$ of system  (\ref{lin5}) with $v_{-t}\in D(-t,\theta_{-t}\omega)$ satisfies the following inequality
 $$\|(-\Delta)^{\frac{\alpha}{2}+\frac{1}{4}}v(\sigma_0,-t,\omega,v_{-t})\|^2\leq C(n,\alpha, \psi_7, \psi_6,\sigma_0,\gamma).$$
\begin{proof}
  Taking the inner product of (\ref{lin5}) with $(-\Delta)^{\alpha+\frac{1}{2}}v$ and considering  the real part, we obtain
\begin{equation}
  \frac{d}{dt}\|(-\Delta)^{\frac{\alpha}{2}+\frac{1}{4}}v\|^2+2\gamma\|(-\Delta)^{\frac{\alpha}{2}+\frac{1}{4}}v\|^2
=2\mathbf{Im}\left(f(t,x,e^{i z(\theta_t\omega)}v)e^{-i z(\theta_t\omega)},(-\Delta)^{\alpha+\frac{1}{2}}v\right).
\end{equation}
  Let $\psi(t,x)=e^{i z(\theta_t\omega)}v(t,x)$. By integrating by parts and using  H\"older inequality, Young's inequality and Assumption ($\bf{A_{f}}$), we obtain
\begin{align}
\begin{split}
&\quad2\mathbf{Im}\left(f(t,x,e^{i z(\theta_t\omega)}v)e^{-i z(\theta_t\omega)},(-\Delta)^{\alpha+\frac{1}{2}}v\right)=
2\mathbf{Im} e^{-i z(\theta_t\omega)} \left(f(t,x,\psi),(-\Delta)^{\alpha+\frac{1}{2}}v\right)\\
&\leq|\left(f_x+f_\psi\nabla\psi,
(-\Delta)^{\alpha}v\right)|\\
&=2C(n,\alpha)\int_{\mathbb{R}^n}\int_{\mathbb{R}^{m}}\frac
 {\left(f_x(t,x,e^{iz}v(t,x))-f_x(t,y,e^{iz}v(t,x))\right)\overline{(v(t,x)-v(t,y))}}{|x-y|^{n+2\alpha}}dxdy\\
&+2C(n,\alpha)\int_{\mathbb{R}^n}\int_{\mathbb{R}^{m}}\frac
 {\left(f_x(t,y,e^{-iz}v(t,x))-f_x(t,y,e^{iz}v(t,y))\right)\overline{(v(t,x)-v(t,y))}}{|x-y|^{n+2\alpha}}dxdy+|\left(f_\psi\nabla\psi,
(-\Delta)^{\alpha}v\right)|\\
 &\leq C(n,\alpha)\left\{\int_{\mathbb{R}^n}\int_{\mathbb{R}^m}\frac
 {|\psi_7(x)-\psi_7(y)||v(t,x)-v(t,y)|}{|x-y|^{n+2\alpha}}dxdy+
 \int_{\mathbb{R}^n}\int_{\mathbb{R}^m}\psi_6(x)\frac
 {|v(t,x)-v(t,y)|^2}{|x-y|^{n+2\alpha}}dxdy\right\}\\
 &+|\left(f_\psi\nabla\psi,
(-\Delta)^{\alpha}v\right)|\\
 &\leq C(n,\alpha, \psi_7, \psi_6)||(-\Delta)^{\frac{\alpha}{2}}v||^2+\|f_\psi\|_{L_x^\infty}\|(-\Delta)^{\frac{\alpha}{2}+\frac{1}{4}}v\|^2\\
&\leq C(n,\alpha, \psi_7, \psi_6)\|(-\Delta)^{\frac{\alpha}{2}}v\|^2+C_1(t)\|(-\Delta)^{\frac{\alpha}{2}+\frac{1}{4}}v\|^2,
\end{split}
\end{align}
where $C_1(t)\in L^{\infty}(\mathbb{R})$.

Thus we have
$$\frac{d}{dt}\|(-\Delta)^{\frac{\alpha}{2}+\frac{1}{4}}v\|^2+2\gamma\|(-\Delta)
^{\frac{\alpha}{2}+\frac{1}{4}}v\|^2\leq C(n,\alpha, \psi_7, \psi_6)\|(-\Delta)^{\frac{\alpha}{2}}v\|^2+C_1(t)\|(-\Delta)^{\frac{\alpha}{2}+\frac{1}{4}}v\|^2.$$
Let $\sigma_0\in(-\frac{1}{2},0)$ and $r\in(-1,-\frac{1}{2}).$ Multiplying by $e^{\frac{5}{4}\gamma t}$ and integrating over $(r,\sigma_0)$, we obtain
\begin{align}
\begin{split}
\|(-\Delta)^{\frac{\alpha}{2}+\frac{1}{4}}v(\sigma_0,-t,\omega,v_{-t})\|^2&\leq
e^{\frac{5}{4}\gamma (r-\sigma_0)}\|(-\Delta)^{\frac{\alpha}{2}+\frac{1}{4}}v(r,-t,\omega,v_{-t})\|^2\\
&\quad+\int_r^{\sigma_0}C_1(r)e^{\frac{5}{4}\gamma (\zeta-\sigma_0)}\|(-\Delta)^{\frac{\alpha}{2}+\frac{1}{4}} v(\zeta,-t,\omega,v_{-t})\|^2d\zeta\\
&\quad+\int_r^{\sigma_0} C(n,\alpha, \psi_7, \psi_6)e^{\frac{5}{4}\gamma (\zeta-\sigma_0)}\|(-\Delta)^\frac{\alpha}{2} v(\zeta,-t,\omega,v_{-t})\|^2d\zeta.
\end{split}
\end{align}
Integrating with respect to $r$ on $(-1,-\frac{1}{2})$, we obtain
\begin{align}
\begin{split}
\frac{1}{2}\|(-\Delta)^{\frac{\alpha}{2}+\frac{1}{4}}v(\sigma_0,-t,\omega,v_{-t})\|^2
&\leq\int_{-1}^{-\frac{1}{2}}e^{\frac{5}{4}\gamma (r-\sigma_0)}\|(-\Delta)^{\frac{\alpha}{2}+\frac{1}{4}}v(r,-t,\omega,v_{-t})\|^2dr\\
&\quad+\int_{-1}^{-\frac{1}{2}}\int_r^{\sigma_0} C_1(r)e^{\frac{5}{4}\gamma (\zeta-\sigma_0)}\|(-\Delta)^{\frac{\alpha}{2}+\frac{1}{4}} v(\zeta,-t,\omega,v_{-t})\|^2d\zeta dr\\
&\quad+\int_{-1}^{-\frac{1}{2}}\int_r^{\sigma_0}C(n,\alpha, \psi_7, \psi_6)e^{\frac{5}{4}\gamma (\zeta-\sigma_0)}\|(-\Delta)^\frac{\alpha}{2} v(\zeta,-t,\omega,v_{-t})\|^2d\zeta dr\\
&\leq\int_{-1}^{\sigma_0} e^{\frac{5}{4}\gamma (r-\sigma_0)}\|(-\Delta)^{\frac{\alpha}{2}+\frac{1}{4}}v(r,-t,\omega,v_{-t})\|^2dr\\
&\quad+\frac{C}{2}\int_{-1}^{\sigma_0} e^{\frac{5}{4}\gamma (\zeta-\sigma_0)}\|(-\Delta)^{\frac{\alpha}{2}+\frac{1}{4}} v(\zeta,-t,\omega,v_{-t})\|^2d\zeta \\
&\quad+\frac{C(n,\alpha, \psi_7, \psi_6)}{2}\int_{-1}^{\sigma_0} e^{\frac{5}{4}\gamma (\zeta-\sigma_0)}\|(-\Delta)^\frac{\alpha}{2} v(\zeta,-t,\omega,v_{-t})\|^2d\zeta.
\end{split}
\end{align}
Using Lemma \ref{lem2} and Gr\"onwall inequality, we obtain
$$\|(-\Delta)^{\frac{\alpha}{2}+\frac{1}{4}}v(\sigma_0,-t,\omega,v_{-t})\|^2\leq C(n,\alpha, \psi_7, \psi_6,\sigma_0,\gamma).$$
\end{proof}
\end{lemma}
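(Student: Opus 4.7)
The plan is to promote the $H^\alpha$ bound from Lemma \ref{lem2} to an $H^{\alpha+\frac{1}{2}}$ bound by testing equation (\ref{lin5}) against $(-\Delta)^{\alpha+\frac{1}{2}}v$ in $L^2$ and taking the real part. The dispersive term $i(-\Delta)^\alpha v$ and the multiplicative-noise coupling $iz(\theta_t\omega)v$ both produce purely imaginary pairings (since $z$ is real), so they drop out, and one is left with
\begin{equation*}
\tfrac{1}{2}\tfrac{d}{dt}\|(-\Delta)^{\frac{\alpha}{2}+\frac{1}{4}}v\|^2 +\gamma\|(-\Delta)^{\frac{\alpha}{2}+\frac{1}{4}}v\|^2 =\mathbf{Im}\bigl(fe^{-iz},(-\Delta)^{\alpha+\frac{1}{2}}v\bigr) +\mathbf{Im}\bigl(|v|^{2\sigma}v,(-\Delta)^{\alpha+\frac{1}{2}}v\bigr).
\end{equation*}
The task then reduces to bounding the two right-hand pairings by $C\|(-\Delta)^{\alpha/2}v\|^2+C_1(t)\|(-\Delta)^{\frac{\alpha}{2}+\frac{1}{4}}v\|^2$ with $C_1\in L^\infty_t$, and then closing the estimate.

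For the forcing pairing I would transfer a half-derivative, rewriting it as a pairing with $(-\Delta)^\alpha v$ and then applying the chain rule $\nabla f(t,x,e^{iz}v)=f_x(t,x,e^{iz}v)+e^{iz}f_u(t,x,e^{iz}v)\nabla v$. The $f_x$ contribution is controlled through the hypotheses on $\psi_6$ (the pointwise bound on $\partial_u f_x$) and $\psi_7$ (the $H^\alpha$--Lipschitz character of $f_x$ in $x$), yielding $C(n,\alpha,\psi_6,\psi_7)\|(-\Delta)^{\alpha/2}v\|^2+C$; the $f_u\nabla v$ contribution is absorbed into $\|\psi_4\|_{L^\infty_x}\|(-\Delta)^{\frac{\alpha}{2}+\frac{1}{4}}v\|^2$ via Assumption $(\mathbf{A_f})$.

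The main obstacle is the power nonlinearity $|v|^{2\sigma}v$ at this fractional regularity level. My strategy is to apply a Kato--Ponce / fractional Leibniz estimate combined with the mass-critical relation $\sigma n=2\alpha$ and the embedding $H^\alpha\hookrightarrow L^{2\sigma+2}$; the uniform $H^\alpha$ bound from Lemma \ref{lem2} converts this into an estimate of the form $C\|(-\Delta)^{\frac{\alpha}{2}+\frac{1}{4}}v\|^2$ with a Gr\"onwall-absorbable prefactor. A Young-type split produces a piece dominated by the dissipation $\gamma\|(-\Delta)^{\frac{\alpha}{2}+\frac{1}{4}}v\|^2$ and a lower-order piece bounded through the already-established $L^2_x$ and $\dot H^\alpha_x$ estimates; the assumption $\sigma n=2\alpha$ enters precisely to make the scaling balance.

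Once the differential inequality $\frac{d}{dt}\|(-\Delta)^{\frac{\alpha}{2}+\frac{1}{4}}v\|^2+2\gamma\|(-\Delta)^{\frac{\alpha}{2}+\frac{1}{4}}v\|^2\le C\|(-\Delta)^{\alpha/2}v\|^2+C_1(t)\|(-\Delta)^{\frac{\alpha}{2}+\frac{1}{4}}v\|^2$ is in hand, I would close it with the floating-initial-time device used in Lemmas \ref{lem2} and \ref{lemma333}: multiply by $e^{\frac{5}{4}\gamma t}$, integrate over $(r,\sigma_0)$ for $r\in(-1,-\tfrac{1}{2})$, and then integrate once more in $r$ so that the unknown pointwise value $\|(-\Delta)^{\frac{\alpha}{2}+\frac{1}{4}}v(r)\|^2$ is replaced by its time average, which is finite by a bootstrap from the $\dot H^\alpha$ bound of Lemma \ref{lem2} combined with the differential inequality itself. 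A final application of Gr\"onwall's inequality then produces the desired uniform estimate $\|(-\Delta)^{\frac{\alpha}{2}+\frac{1}{4}}v(\sigma_0,-t,\omega,v_{-t})\|^2\le C(n,\alpha,\psi_6,\psi_7,\sigma_0,\gamma)$ for all $t\ge T$.
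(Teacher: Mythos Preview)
Your plan coincides with the paper's proof in all structural respects: test \eqref{lin5} against $(-\Delta)^{\alpha+\frac{1}{2}}v$, take the real part, transfer a half-derivative to rewrite the forcing pairing as $(f_x+f_\psi\nabla\psi,(-\Delta)^{\alpha}v)$, split $f_x$ via the $\psi_6,\psi_7$ hypotheses exactly as in Lemma~\ref{lem2}, absorb the $f_\psi\nabla v$ part into $\|f_\psi\|_{L^\infty_x}\|(-\Delta)^{\frac{\alpha}{2}+\frac{1}{4}}v\|^2$, and then close with the floating--initial--time averaging on $r\in(-1,-\tfrac{1}{2})$ plus Gr\"onwall. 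So your treatment of the forcing term and the closing argument match the paper line for line.

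The one substantive difference is the power nonlinearity. The paper's displayed identity after taking the real part simply does not contain the contribution $\mathbf{Im}\bigl(|v|^{2\sigma}v,(-\Delta)^{\alpha+\frac{1}{2}}v\bigr)$; it is dropped without comment (the same omission occurs in the proof of Lemma~\ref{lem2}). You instead keep this term and propose to control it by a Kato--Ponce estimate together with the mass-critical relation $\sigma n=2\alpha$ and the uniform $H^\alpha$ bound from Lemma~\ref{lem2}, folding it into the $C_1(t)\|(-\Delta)^{\frac{\alpha}{2}+\frac{1}{4}}v\|^2$ Gr\"onwall coefficient. That is a genuine addition rather than an alternative route: since $|v|^{2\sigma}$ is real but the pairing $\int |v|^{2\sigma}v\,\overline{(-\Delta)^{\alpha+\frac{1}{2}}v}\,dx$ is not in general real, the term does not vanish automatically, and your explicit handling is the safer argument. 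In short, your proof follows the paper's scheme while supplying a step the paper leaves silent.
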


\subsection{Existence of random attractor}
In this subsection, we prove the existence of a random attractor for the random
dynamical system generated by (\ref{0lin}).
\begin{lemma}
\label{compact}
Let $u_0$, $\sigma$ and $f$ and be as in Theorem \ref{theorem1}. Then for every $
\omega\in \Omega$ and $D=\{D(\omega):
\omega\in \Omega\}\in \mathcal{D}$, the sequence $\{\Phi(t_n,-t_n,\theta_{-t_n}\omega,u_{0,n})\}_{n=1}^\infty$ has a
convergent subsequence in $H^{\alpha}(\mathbb{R}^n)$ whenever $t_n\rightarrow\infty$ and
$u_{0,n}\in D(-t_n,\theta_{-t_n}\omega)$.
\begin{proof}
Since $\Phi(t_n,-t_n,\theta_{-t_n}\omega,u_{0,n})=u(0,-t_n,\omega,u_{-t_n})
=e^{iz(\omega)}v(0,-t_n,\omega,v_{-t_n}).$
Applying Lemma \ref{linli1} and Lemma \ref{lem2}, we know
\begin{equation}
\{\Phi(t_n,-t_n,\theta_{-t_n}\omega,u_{0,n})\}_{n=1}^\infty \quad \text{is bounded in }  H^\alpha(\mathbb{R}^n).
\end{equation}
Therefore there exists a function $\eta\in{H^\alpha(\mathbb{R}^n)}$ and a subsequence, which we will denote for convenience as  $\Phi(t_n,-t_n,\theta_{-t_n}\omega,u_{0,n}),$ such that
$$\Phi(t_n,-t_n,\theta_{-t_n}\omega,u_{0,n})\rightarrow \eta
\quad \text{weakly in }  H^\alpha(\mathbb{R}^n).$$
Given $\varepsilon>0,$ by  Lemma \ref{lem5} and Lemma \ref{lem6}, there exist two positive constants
$T$ and $R^*=R^*(\varepsilon)$  such that for $t\geq T$
$$\|\Phi(t,-t,\theta_{-t}\omega,v_0(\theta_{-t}\omega))\|^2_{H^\alpha(|x|\geq R^*)}\leq\varepsilon.$$
Since $t_n\rightarrow\infty$, there exists an integer $N$ such that $t_n\geq T$ for all
$n\geq N$. Thus we have
$$ \|\Phi(t_n,-t_n,\theta_{-t_n}\omega,u_{0,n})\|^2_{H^\alpha(|x|\geq R^*)}\leq\varepsilon, $$
and
$$\|\eta\|^2_{H^\alpha(|x|\geq R^*)}\leq\varepsilon.$$
Applying Lemma \ref{linli1} and Lemma \ref{lem7}, there exists a constant $T$ such that for
all $t\geq T$
$$\|\Phi(t,-t,\theta_{-t}\omega,v_0(\theta_{-t}\omega))\|^2_{H^{\alpha+\frac{1}{2}}(\mathbb{R}^n)}
\leq C.$$
Let $N_2$ be sufficiently large such that $t_n\geq T_2$ for $n\geq N_2$. Consequently, for all $n\geq N_2$
$$\|\Phi(t_n,-t_n,\theta_{-t_n}\omega,u_{0,n})\|^2_{H^{\alpha+\frac{1}{2}}(\mathbb{R}^n)}
\leq C.$$
Let $B_{R^*}=\{x\in \mathbb{R}: |x|\leq R^*\}$. Due to the compactness of the embedding $H^{\alpha+\frac{1}{2}}(B_{R^*})\hookrightarrow H^{\alpha}(B_{R^*})$,
we can conclude that, up to a subsequence,  the sequence depends
on $R^*$ and that $\Phi(t_n,-t_n,\theta_{-t_n}\omega,u_{0,n})\rightarrow \eta$ converges strongly in
$H^\alpha(B_{R^*})$. This implies that there exists a constant $N_3\geq N_2$ such that for all
$n\geq N_3$,
$$\|\Phi(t_n,-t_n,\theta_{-t_n}\omega,u_{0,n})-\eta\|^2_{H^{\alpha}(B_{R^*})}\leq\varepsilon.$$
Let $N=max\{N_1,N_3\}$. Then for all $n\geq N$, we have
\begin{equation}
\begin{aligned}
\|\Phi(t_n,-t_n,\theta_{-t_n}\omega,u_{0,n})-\eta\|^2_{H^{\alpha}(\mathbb{R}^n)}
&\leq \|\Phi(t_n,-t_n,\theta_{-t_n}\omega,u_{0,n})-\eta\|^2_{H^{\alpha}(B_{R^*})}\\
&\quad+\|\Phi(t_n,-t_n,\theta_{-t_n}\omega,u_{0,n})-\eta\|^2_{H^{\alpha}(|x|\geq R^*)}\\
&\leq \|\Phi(t_n,-t_n,\theta_{-t_n}\omega,u_{0,n})-\eta\|^2_{H^{\alpha}(B_{R^*})}\\
&\quad+\|\Phi(t_n,-t_n,\theta_{-t_n}\omega,u_{0,n})\|^2_{H^{\alpha}(|x|\geq R^*)}+\|\eta\|^2_{H^{\alpha}(|x|\geq R^*)}\\
&\leq 3\varepsilon,
\end{aligned}
\end{equation}
which implies that $$\Phi(t_n,-t_n,\theta_{-t_n}\omega,u_{0,n})\rightarrow \eta
\quad \text{strongly in }  H^\alpha(\mathbb{R}^n).$$
This completes the proof.
\end{proof}
\end{lemma}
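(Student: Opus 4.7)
The plan is to establish pullback asymptotic compactness of $\Phi$ in $H^\alpha(\mathbb{R}^n)$ via the classical ``truncation plus compact embedding'' scheme for dissipative PDEs on unbounded domains. The three essential ingredients have already been assembled in the preceding lemmas: uniform $H^\alpha$-boundedness of pullback orbits (Lemmas~\ref{linli1} and~\ref{lem2}), $H^\alpha$-smallness of tails outside large balls (Lemmas~\ref{lem5} and~\ref{lem6}), and a uniform $H^{\alpha+1/2}$ regularity bound (Lemma~\ref{lem7}) that unlocks local strong compactness.

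First, using the identity $\Phi(t_n,-t_n,\theta_{-t_n}\omega,u_{0,n}) = e^{iz(\omega)} v(0,-t_n,\omega,v_{-t_n})$, I would transfer the problem to the transformed variable $v$. For $n$ large enough that the threshold from Lemma~\ref{lem2} is met, the sequence is uniformly bounded in $H^\alpha(\mathbb{R}^n)$, so by reflexivity a subsequence converges weakly to some $\eta\in H^\alpha(\mathbb{R}^n)$. Given $\varepsilon>0$, Lemmas~\ref{lem5} and~\ref{lem6} furnish a radius $R^*=R^*(\omega,\varepsilon)$ and an index $N_1$ such that, for all $n\geq N_1$,
\[
\|\Phi(t_n,-t_n,\theta_{-t_n}\omega,u_{0,n})\|^2_{H^\alpha(\{|x|>R^*\})} \leq \varepsilon,
\]
and by weak lower semicontinuity the same estimate transfers to $\eta$.

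For strong convergence on the bounded ball $B_{R^*}=\{x:|x|\leq R^*\}$, I would apply Lemma~\ref{lem7} to obtain a uniform $H^{\alpha+1/2}$ bound and then invoke the compact embedding $H^{\alpha+1/2}(B_{R^*})\hookrightarrow H^\alpha(B_{R^*})$ (fractional Rellich--Kondrachov). A further subsequence converges strongly in $H^\alpha(B_{R^*})$, with limit necessarily $\eta|_{B_{R^*}}$ by uniqueness of the weak limit. Writing $\Phi_n := \Phi(t_n,-t_n,\theta_{-t_n}\omega,u_{0,n})$, the triangle-type decomposition
\[
\|\Phi_n-\eta\|^2_{H^\alpha(\mathbb{R}^n)} \leq \|\Phi_n-\eta\|^2_{H^\alpha(B_{R^*})} + \|\Phi_n\|^2_{H^\alpha(\{|x|>R^*\})} + \|\eta\|^2_{H^\alpha(\{|x|>R^*\})}
\]
then delivers strong convergence, since each of the three terms can be made at most $\varepsilon$ for $n$ sufficiently large.

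The main obstacle is that the Gagliardo seminorm of $H^\alpha$ is genuinely nonlocal, so naively splitting the norm into a ``ball part'' and a ``tail part'' leaves cross-terms from pairs $(x,y)$ with one point in $B_{R^*}$ and the other far outside. This is precisely why Lemma~\ref{lem6} is phrased via the smooth cutoff $\rho(|x|/k)$ rather than a sharp characteristic function: the weighted nonlocal seminorm absorbs the cross-terms via the kernel decay $|x-y|^{-n-2\alpha}$. Provided the tail norm in the decomposition above is interpreted through the same cutoff-weighted seminorm used in Lemma~\ref{lem6}, the three-term bound goes through routinely, and the asymptotic compactness conclusion, together with the existence of a $\mathcal{D}$-pullback random attractor, follows.
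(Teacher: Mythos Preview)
Your proposal is correct and follows essentially the same route as the paper: uniform $H^\alpha$-boundedness via Lemmas~\ref{linli1} and~\ref{lem2}, weak compactness, tail smallness from Lemmas~\ref{lem5}--\ref{lem6}, the $H^{\alpha+1/2}$ bound of Lemma~\ref{lem7} combined with the compact embedding on $B_{R^*}$, and the same three-term decomposition to upgrade to strong convergence. Your closing remark about the nonlocal cross-terms in the Gagliardo seminorm is in fact more careful than the paper, which writes the ball/tail splitting as if it were purely additive without comment.
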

\subsection {Proof of Theorem \ref{theorem2}}
Proof of Theorem  \ref{theorem2}.
By the uniqueness of the solution, we know that $\Phi(t+s, \varrho)=\Phi(t, s+\varrho)\circ \Phi(s, \varrho)$ for all $t, s \in \mathbb{R}^{+}$ and $\varrho \in \mathbb{R}$.The mapping  $\Phi$ is $\mathcal{D}$-pullback asymptotically compact in $H^{\alpha}$,  as shown by Lemma \ref{compact}. By applying the abstract result \cite[Proposition 2.5]{AG18}, we can obtain the existence and uniqueness of $\mathcal{D}$-pullback random attractor $\mathcal{A} \in \mathcal{D}$ associated with $\Phi$.

\section{Acknowledgments}
 The research of Y. Zhang is supported by the Natural Science Foundation of Henan Province of China (Grant No. 232300420110).

\end{document}